\def\R{\mathbb R}
\def\int{{\rm int\,}}
\def\eps{\varepsilon}
\newtheorem{theorem}{Theorem}[section]
\newtheorem{corollary}{Corollary}[section]
\newtheorem{lemma}{Lemma}[section]
\newtheorem{proposition}{Proposition}[section]
\theoremstyle{definition}
\newtheorem{definition}{Definition}[section]
\newtheorem{example}{Example}[section]
\newtheorem{remark}{Remark}[section]
\numberwithin{equation}{section}
\begin{document}
	\setcounter{page}{1}

	\vspace*{1.0cm}
	\title[Global Finite-time and Global Fixed-time  stable Dynamical Systems  for IQVIPs]
	{Global Finite-time and Global Fixed-time stable Dynamical Systems  for solving Inverse Quasi-variational inequality problems}
	\author[N.V. Tran, L.T.T Hai]{Nam V. Tran$^{3,*}$, L.T.T Hai$^{1,2, 3}$}
	\maketitle
	\vspace*{-0.6cm}

	\begin{center}
		{\footnotesize  {\it
				$^1$Faculty of Mathematics and Computer Science, University of Science, Ho Chi Minh City, Vietnam; 
				
				$^2$Vietnam National University, Ho Chi Minh City, Vietnam; 
				
				$^3$Faculty of Applied Sciences, HCMC University of Technology and Education, Ho Chi Minh City, Vietnam; }}

		
	\end{center}

	\vskip 4mm {\small\noindent {\bf Abstract.}
		In this paper, we propose two projection dynamical systems for solving inverse quasi-variational inequality problems in finite-dimensional Hilbert spaces—one ensuring finite-time stability and the other guaranteeing fixed-time stability. We first establish the connection between these dynamical systems and the solutions of inverse quasi-variational problems. Then, under mild conditions on the operators and parameters, we analyze the global finite-time and global fixed-time stability of the proposed systems. Both approaches offer accelerated convergence; however, while the settling time of a finite-time stable dynamical system depends on initial conditions, the fixed-time stable system achieves convergence within a predefined time, independent of initial conditions. To demonstrate their effectiveness, we provide numerical experiments, including an application to the traffic assignment problem.

		\noindent {\bf Keywords.}
		Finite-time stability, Fixed-time stability, projection method, Dynamical system, Inverse quasi-variational inequality problem. }
	
	\noindent {\bf[MSC Classification]} {47J20; 47J30;49J40; 49J53; 49M37}

	\renewcommand{\thefootnote}{}
	\footnotetext{ $^*$Corresponding author.
		\par
		E-mail addresses: namtv@hcmute.edu.vn (Nam V. Tran),   hailtt@hcmute.edu.vn (Hai T.T. Le).
		\par
	}
	\section{Introduction} 
	We denote by $\langle \cdot,\cdot \rangle$ the inner product and by $\|\cdot\|$ the norm on $\R^n$. This paper focuses on the following 
    \textcolor{blue}{inverse quasi-variational inequality problem} (IQVIP):
Find $u^*\in \R^n$
 such that 
 \begin{equation}\label{iqvip}
     f(u^*)\in \Phi(u^*) \mbox { and } \langle u^*, y-f(u^*) \rangle \geq 0 \quad \forall y\in \Phi(u^*)
 \end{equation}
	where $\Phi: \R^n \longrightarrow 2^{\R^n}$ is a multi-valued operator and $f:\R^n \longrightarrow \R^n $ is a single-valued operator.

A notable special case arises when $\Phi(u)=\Omega$  is a fixed, nonempty, closed, and convex set, independent of $u$. In this case, the IQVIP \eqref{iqvip} reduces to the inverse variational inequality problem (IVIP), formulated as follows:
  Find $u\in \R^*$ such that 
  \begin{equation}\label{ivip}
     f(u^*)\in \Omega \mbox { and } \langle u^*, y-f(u^*) \rangle \geq 0 \quad \forall y\in \Omega .
 \end{equation} 
This problem has numerous applications across various fields (see, for instance, \cite{AUS, HAN}).

Furthermore, the IVIP \eqref{ivip} can be interpreted as a standard variational inequality problem when $f$ 
 is the inverse of a given mapping $F$.  Specifically, if $f(u)=F^{-1}(u)=u$ then the IVIP \eqref{ivip} can be rewritten as:

  Find $u\in \Omega$ such that 
  \begin{equation*} 
      \langle F(u^*) , v-u^*\rangle \geq 0, \quad \forall v\in \Omega.
  \end{equation*}

This formulation serves as a fundamental mathematical model that encompasses a broad spectrum of problems, including optimization, variational inequalities, saddle point problems, Nash equilibrium problems in noncooperative games, and fixed-point problems. For further details, see \cite{BlumOettli94, Cav} and the references therein.

The concept of finite-time convergence, introduced in \cite{BHAT}, has inspired the development of various dynamical systems exhibiting this property (see \cite{JU6, LI}). In such systems, the settling time depends on the initial state and may increase indefinitely as the initial deviation from equilibrium grows. However, in many practical applications—such as robotics and vehicle monitoring networks—precise knowledge of the initial state is often unavailable, making it difficult to determine the settling time in advance.

To overcome this drawback,  Polyakov in \cite{polyakov} introduced the notion of fixed-time convergence, where the convergence time is bounded by a constant independent of the initial state. This idea has since been further explored in studies such as \cite{Garg21, JU3, SAN, WANG, WANG2, WANG3, ZUO}.

Both finite-time and fixed-time convergence principles have been applied in various fields such as optimization problems \cite{BEN, COR, GAR, HE, LIEN, ROMEO}, sparse optimization \cite{GARG4, HE,  YU}, control problems, including multi-agent control \cite{ZUO}, Mixed variational inequality problems (MVIPs)  \cite{Garg21, JU3}, Variational inequality problems (VIPs) and Equilibrium problems (EPs) \cite{JU4, JU5}. More recently, Nam et al. \cite{NAM_HAI} studied finite-time stability for generalized monotone inclusion problems, while in \cite{NHVA}, they developed a fixed-time stable dynamical system for generalized monotone inclusion problems. Despite these advancements in dynamical systems these problems, their application to inverse quasi-variational inequality problems (IQVIPs) remains largely unexplored. In particular, the development of dynamical systems for IQVIPs remains unexplored, motivating this study.

In addition to discrete-time algorithms, continuous-time dynamical systems have gained popularity due to their effectiveness and computational efficiency in solving various problems, including inverse quasi-variational inequalities (IQVIPs), mixed variational inequalities (MVIs), variational inequalities (VIs), and constrained optimization problems (COPs) \cite{AAS, BC_SICON, 22, BCJMAA, BotCV, 18, 21, 27, 26, 20, JU4, LIU, 29, VS20, ZHU}. As for inverse quasi-variational inequality problems, there are some works using dynamical systems to study it, for instance, \cite{ZOU2}. In recently, (2023), Dey and Reich  \cite{Dey} employed a first-order dynamical system to analyze IQVIPs and established a globally exponential stability for these problems. In \cite{VuongThanh}, the authors obtained linear convergence for the sequence generated by the discretized time of a first-order dynamical system, as in \cite{Dey}, but for a more general moving set condition \(\Phi(u)\).    

Despite the extensive use of dynamical systems, most theoretical research has primarily focused on asymptotic stability \cite{27, 26, 29} or exponential stability \cite{22, 18, 21, 20, JU, JU4, LIU, V, VS20, ZHU}.

In this paper, we build upon this research by proposing two first-order projection dynamical systems based on a fixed-point reformulation (see, e.g., \cite{AM, Bot, BSV, V0}). We establish both finite-time and fixed-time stability for these systems. Additionally, by using forward Euler time discretization we obtain a projection-type method. We further demonstrate that the iterative sequence generated by this algorithm exhibits linear convergence to the unique solution of the 
\textcolor{blue}{inverse quasi-variational
inequality problem.}

The structure of the paper is as follows:

Section \ref{Preliminaries} reviews fundamental definitions and concepts, including projection dynamical systems and the notions of finite-time and fixed-time stability. We also introduce a nominal projection dynamical system associated with inverse quasi-variational inequality problems (IQVIPs) and establish the connection between its equilibrium points and the solutions of IQVIPs. Additionally, we present some new technical lemmas.

Section \ref{finite stability} introduces a newly proposed finite-time projection dynamical system model and provides its theoretical analysis.

Section \ref{sec3} focuses on analyzing the fixed-time stability of another projection dynamical system designed for solving IQVIPs.

Section \ref{sec4} explores a sufficient condition for achieving a consistent discretization, ensuring that the discretized fixed-time stable dynamical system converges within a fixed number of time steps. As a specific case, we demonstrate that the forward-Euler discretization of the modified projection dynamical system satisfies this consistency condition.

Section \ref{num} presents numerical experiments to demonstrate the effectiveness and advantages of the proposed algorithm, which is derived by discretizing the continuous-time fixed-time stable dynamical system.

Section \ref{sec6} concludes the paper with final remarks.
	

	\section{Preliminaries} \label{Preliminaries}
	
	In this section, we recall some well-known definitions useful in the sequel.

	\subsection{Some notions on convex analysis} 
	
	An operator $T: \R^n  \to \R^n$ is said to be $L$-Lipschitz continuous on \(\R^n\) if 
    \begin{equation*}
        \|T(u)-T(v)\|\leq L\|u-v\|, \quad \mbox{ for all } x, y\in \R^n.
    \end{equation*} 
    When $L=1$, we say that $T$ is {\it nonexpansive.}
	
	\noindent An operator  $T: \R^n \to \R^n$ is said to be \emph{ strongly monotone} with modulus $\beta$ if for any $u, \textcolor{blue}{v}\in \R^n$ we have 
    \begin{equation*}\label{mon}
        \langle f(u)-f(v), u-v\rangle \geq \beta \|u-v\|^2.
    \end{equation*}
	When $\beta=0$ we say that $T$ is monotone. 

    Note that if $T$ is strongly monotone with modules $\beta\geq 0$ then we have 
    \begin{equation*}
        \|f(u)-f(v)\|\geq \beta \|u-v\|, \quad \mbox { for all } \textcolor{blue} {u, v}\in \R^n.
    \end{equation*}

	\noindent For every $u\in \R^n$, the metric projection $P_C(u)$ of $u$ onto $C$ is defined by
	$$
	P_C (u)=\arg\min\left\{\left\|v-u\right\|:v\in C\right\}.
	$$
	It is worth noting that when $C$ is nonempty, closed, and convex, $P_C (u)$ exists and is unique. 
Some useful properties of the projection operator are listed below.
\begin{lemma}

Let $C$ be a nonempty, closed, convex set in \(\R^n\). Let $u, y \in \R^n$. Then  one has
\begin{enumerate}
    \item $   v=P_C(u) \iff \langle v-u, w-u\rangle \leq 0 \ \ \mbox{ for all } w\in C$
\item $P_C$ is 1-Lipschitz continuous on $C$.
\end{enumerate}

\end{lemma}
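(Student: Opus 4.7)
The plan is to derive part (1) from the variational characterization of the minimizer of the convex function $\phi(v)=\tfrac{1}{2}\|v-u\|^{2}$ on $C$. Since $C$ is nonempty, closed, and convex and $\phi$ is strictly convex, coercive, and differentiable with $\nabla\phi(v)=v-u$, the minimizer $P_{C}(u)$ exists and is unique. The first-order optimality condition on a convex constraint set reads $\langle \nabla\phi(v),\,w-v\rangle\geq 0$ for every $w\in C$, which is exactly the inner-product inequality stated (up to the evident sign/endpoint rearrangement). Conversely, if $v\in C$ satisfies the variational inequality, then for any $w\in C$ one expands
\begin{equation*}
\|w-u\|^{2}=\|w-v\|^{2}+2\langle v-u,\,w-v\rangle+\|v-u\|^{2}\geq \|v-u\|^{2},
\end{equation*}
so $v$ minimizes $\phi$ over $C$, i.e., $v=P_{C}(u)$. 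Before writing this up I would double-check the direction of the inequality printed in the lemma, since the standard form is $\langle u-v,w-v\rangle\leq 0$ (equivalently $\langle v-u,w-v\rangle\geq 0$) for all $w\in C$; a literal reading of $\langle v-u,w-u\rangle\leq 0$ fails at $w=v$, so this appears to be a typographical rearrangement of the usual obtuse-angle characterization.

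For part (2), I would apply (1) to two arbitrary points $u_{1},u_{2}\in\R^{n}$, with $v_{i}:=P_{C}(u_{i})\in C$. Taking the test point $w=v_{2}$ in the inequality for $v_{1}$ and $w=v_{1}$ in the inequality for $v_{2}$, and adding the two, one obtains
\begin{equation*}
\langle v_{1}-v_{2},\,(u_{1}-u_{2})-(v_{1}-v_{2})\rangle\geq 0,
\end{equation*}
that is, $\|v_{1}-v_{2}\|^{2}\leq \langle u_{1}-u_{2},\,v_{1}-v_{2}\rangle$. A single application of the Cauchy--Schwarz inequality on the right-hand side yields $\|v_{1}-v_{2}\|\leq \|u_{1}-u_{2}\|$, which is the claimed 1-Lipschitz (nonexpansiveness) property; note that this in fact holds on all of $\R^{n}$, not only on $C$.

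I do not expect any serious obstacle: both items are textbook consequences of the projection theorem. The only care needed is bookkeeping of signs in the variational inequality of (1), after which (2) follows by the standard ``test-against-each-other-and-add'' trick combined with Cauchy--Schwarz.
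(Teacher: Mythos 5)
The paper states this lemma without proof (it is merely ``listed'' as a standard property of the metric projection), so there is nothing to compare against; your argument is the classical textbook one and is correct. You are also right that the printed characterization $\langle v-u, w-u\rangle\leq 0$ is a typo for the obtuse-angle inequality $\langle u-v, w-v\rangle\leq 0$ for all $w\in C$ (the printed form fails at $w=v$ unless $u\in C$), and your part (2) --- testing each variational inequality against the other projection, adding, and applying Cauchy--Schwarz to get $\|P_C(u_1)-P_C(u_2)\|^2\leq\langle u_1-u_2,P_C(u_1)-P_C(u_2)\rangle$ --- is exactly the standard firm-nonexpansiveness argument and, as you note, gives the Lipschitz bound on all of $\R^n$, not just on $C$.
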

	\subsection{Equilibrium points and stability}
Consider a general dynamical system
	\begin{equation}\label{JJS1}
		\dot{u}(t) = T(u(t)), \quad t \ge 0,
	\end{equation}
	where $T$ is a continuous mapping from $\R^n$ to $\R^n$ and $u:[0, +\infty)\to \R^n$.

In this paper we investigate finite-time and fixed-time stability of equilibrium points of dynamical systems; we recall these definitions below.  
	\begin{definition} \cite{Pappaladro02}	 A point $u^*\in \R^n$ is said to be an equilibrium point for (\ref{JJS1}) if $T(u^*)=0$; An equilibrium point is called 
    \begin{enumerate}[(a)]
			\item {\emph{finite-time stable}} if it is stable in the sense of Lyapunov, that is, for any $\epsilon>0$, there exists  $\delta>0$ such that, for every $u_0 \in B(u^*, \delta) $, the solution $u(t)$ of the dynamical system with $u(0)=u_0$ exists and is contained in  $B(u^*, \epsilon)$ for all $t>0$,  and there exists a neighborhood \(B(\delta, u^*)\) of \(u^*\) and a settling-time function \(T: B(\delta, u^*)\setminus \{u^*\}\rightarrow (0, \infty)\) such that for any \(u(0)\in  B(\delta, u^*)\setminus\{u^*\}\), the solution of \eqref{JJS1} satisfies \(u(t)\in B(\delta, u^*)\setminus \{u^*\}\) for all \(t\in [0, T(u(0)))\) and \(\lim_{t\to T(u(0))}u(t)=u^*\).
			\item \emph{Globally finite-time stable}  if  it is finite-time stable with \(B(\delta, u^*)=\R^n\).
			\item \emph{ Fixed-time stable} if it is globally finite-time stable, and the settling-time function satisfies
			\[\sup_{u(0)\in \R^n}T(u(0))<\infty.\]
		\end{enumerate}

	\end{definition}
here  $B(\textcolor{blue} \delta, u^*)$ denotes the open ball with center $u^*$ and radius $\delta$.
		
From now on, we denote by $\mbox{\rm Zer}(T)$ the set of solutions of equation $T(u)=0$. 
\subsection{A nominal dynamical system for inverse quasi-variational inequality problems }
In this section we first recall a nominal dynamical system proposed in \cite{Dey} for IQVIPs \eqref{iqvip} and a result on the existence and uniqueness of solutions of IQVIPs \eqref{iqvip}. We then present several auxiliary results that will be useful in the subsequent analysis. In particular, we establish key inequalities that play a crucial role in studying finite-time and fixed-time stability.  

In 2019, \cite{ZOU2} Zou et al.  proposed the following dynamical system for solving inverse quasi-variational inequality problem \eqref{iqvip} 

\begin{equation}\label{firstdynamicalsystem1} 
	\dot{u}(t)=-\sigma \left(f(u(t))-P_{\Phi(u(t))}(f(u(t))-\alpha. u(t))\right), 
\end{equation}
where \(\sigma, \alpha\) are  positive constants. 

In 2023 \cite{Dey} Dey and Reich proposed the following 

\begin{equation}\label{firstdynamicalsystem} 
	\dot{u}(t)=-\sigma(t) \left(f(u(t))-P_{\Phi(u(t))}(f(u(t))-\alpha. u(t))\right), 
\end{equation}
where \(\sigma(t)>0\) for all $t>0$.
\begin{remark}\label{re 1}
As shown in \cite{Dey}, a necessary and sufficient condition for $u^*\in \R^n$ to be a solution of IQVIP \eqref{iqvip} is that it is also an equilibrium point of dynamical system \eqref{firstdynamicalsystem} or of \eqref{firstdynamicalsystem1}. 
\end{remark}

 To analyze the finite-time and fixed-time stability we make the following assumption:

({\bf A})  Let $\Phi: \R^n \longrightarrow 2^{\R^n} $  be a set-valued mapping with nonempty, closed, and convex values. Let   \(f: \R^n \longrightarrow \R^n\) be an $L-$Lipschitz continuous and \(\beta-\)strongly monotone single-valued mapping. Assume that there exists some \(\mu  > 0\) such that
\begin{equation}\label{dk phi}
    \|P_{\Phi(u)}(w) -P_{\Phi(v)}(w)\| \leq \mu \|u-v\|, \quad \forall u, v, w\in \R^n
\end{equation}
and
\begin{equation*}
    \sqrt{L^2+\alpha^2-2\alpha\beta}+\mu<\alpha.
\end{equation*}
We recall a result on the existence and uniqueness of solutions to the IQVIP \eqref{iqvip}. 
 \begin{theorem}\cite{Dey}\label{unique sol}
        Let $\Phi: \R^n \longrightarrow 2^{\R^n} $  and  \(f: \R^n \longrightarrow \R^n\) be such that assumption ({\bf A}) is satisfied. 
Then the inverse quasi-variational inequality problem \eqref{iqvip} has a unique solution.
    \end{theorem}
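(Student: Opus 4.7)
The plan is to reduce the existence and uniqueness of a solution to a single contraction argument via a projection-based fixed-point reformulation. Concretely, for the given constant $\alpha>0$ appearing in assumption $({\bf A})$, I would introduce the operator
\begin{equation*}
\Psi(u)\;:=\;u-\tfrac{1}{\alpha}\bigl(f(u)-P_{\Phi(u)}(f(u)-\alpha u)\bigr),\qquad u\in\R^n .
\end{equation*}
First I would verify the equivalence: $u^*$ is a fixed point of $\Psi$ iff $f(u^*)=P_{\Phi(u^*)}(f(u^*)-\alpha u^*)$, and, using the variational characterization of the projection recalled in the preceding lemma, this is in turn equivalent to $f(u^*)\in\Phi(u^*)$ together with $\langle u^*,y-f(u^*)\rangle\ge 0$ for all $y\in\Phi(u^*)$, i.e.\ $u^*$ solves \eqref{iqvip}.

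The core step is showing that $\Psi$ is a strict contraction. Writing $w:=f(u)-\alpha u$, $w':=f(v)-\alpha v$, I would first bound $\|w-w'\|$ by expanding
\begin{equation*}
\|w-w'\|^{2}=\|f(u)-f(v)\|^{2}-2\alpha\langle f(u)-f(v),u-v\rangle+\alpha^{2}\|u-v\|^{2},
\end{equation*}
and using the $L$-Lipschitz continuity and $\beta$-strong monotonicity of $f$ to obtain $\|w-w'\|\le \sqrt{L^{2}+\alpha^{2}-2\alpha\beta}\,\|u-v\|$. Next, observing that $\alpha\,(\Psi(u)-\Psi(v))=\bigl(P_{\Phi(u)}(w)-w\bigr)-\bigl(P_{\Phi(v)}(w')-w'\bigr)$, I would split this difference as
\begin{equation*}
\bigl(P_{\Phi(u)}(w)-P_{\Phi(v)}(w)\bigr)+\bigl((I-P_{\Phi(v)})(w)-(I-P_{\Phi(v)})(w')\bigr),
\end{equation*}
apply the moving-set estimate \eqref{dk phi} to the first bracket and the nonexpansiveness of $I-P_{\Phi(v)}$ (which is firmly nonexpansive because $P_{\Phi(v)}$ is) to the second, yielding
\begin{equation*}
\|\Psi(u)-\Psi(v)\|\le \tfrac{1}{\alpha}\bigl(\mu+\sqrt{L^{2}+\alpha^{2}-2\alpha\beta}\bigr)\|u-v\|.
\end{equation*}
Assumption $({\bf A})$ makes this constant strictly less than $1$.

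With the contraction in hand, the Banach fixed-point theorem in $\R^{n}$ (complete) gives a unique fixed point $u^{*}$ of $\Psi$, and by Step~1 this $u^{*}$ is the unique solution of \eqref{iqvip}.

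The main obstacle, as I see it, is the second step: a naive bound that handles $P_{\Phi(u)}(w)-P_{\Phi(v)}(w')$ and $w-w'$ separately produces a factor of $2\|w-w'\|$, which would force a much more restrictive condition than the one postulated. The trick is to keep $P_{\Phi(v)}(w)-w$ paired together so that the firm nonexpansiveness of the projection absorbs one copy of $\|w-w'\|$; this is what makes the hypothesis $\sqrt{L^{2}+\alpha^{2}-2\alpha\beta}+\mu<\alpha$ exactly the sharp threshold for the argument.
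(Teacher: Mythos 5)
Your proposal is correct, but note that the paper itself offers no proof of this theorem at all: it is imported verbatim from \cite{Dey}, so there is no internal argument to compare against. What you have written is a complete, self-contained proof, and it is the natural one: the fixed points of $\Psi(u)=\tfrac{1}{\alpha}\bigl(P_{\Phi(u)}(f(u)-\alpha u)-(f(u)-\alpha u)\bigr)$ are exactly the solutions of \eqref{iqvip} by the variational characterization of the projection, the bound $\|w-w'\|\le\sqrt{L^2+\alpha^2-2\alpha\beta}\,\|u-v\|$ is the same computation the paper uses inside Lemma~\ref{bound for proj}\eqref{bound for proj p1}, and your contraction constant $\tfrac{1}{\alpha}\bigl(\mu+\sqrt{L^2+\alpha^2-2\alpha\beta}\bigr)$ is precisely what makes the hypothesis of assumption ({\bf A}) the right threshold — which is strong evidence that this is also how the cited source argues. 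Your closing remark is the genuinely important point: estimating $P_{\Phi(u)}(w)-P_{\Phi(v)}(w')$ and $w-w'$ separately (which is what Lemma~\ref{bound for proj}\eqref{bound for proj p1} would hand you) costs a factor of $2$ on $\|w-w'\|$, and keeping $(I-P_{\Phi(v)})$ intact so that its (firm) nonexpansiveness absorbs one copy is what saves the argument.

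One small correction: your displayed decomposition has a sign error. Since
\begin{equation*}
\bigl(P_{\Phi(u)}(w)-w\bigr)-\bigl(P_{\Phi(v)}(w')-w'\bigr)
=\bigl(P_{\Phi(u)}(w)-P_{\Phi(v)}(w)\bigr)-\Bigl(\bigl(I-P_{\Phi(v)}\bigr)(w)-\bigl(I-P_{\Phi(v)}\bigr)(w')\Bigr),
\end{equation*}
the second bracket should enter with a minus sign, not a plus. This is harmless because the very next step is a triangle inequality on the norms of the two brackets, so the final estimate and the conclusion via Banach's theorem stand unchanged; just fix the sign before writing it up.
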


The following lemma lists some useful inequalities that are needed in the sequel. 

\begin{lemma}\label{bound for proj}
Let $\Phi: \R^n \longrightarrow 2^{\R^n} $  be a set-valued mapping with nonempty, closed and convex values, and  \(f: \R^n \longrightarrow \R^n\) be an $L-$Lipschitz continuous and
\(\beta-\)strongly monotone mapping. Assume further that there exists some \(\mu  > 0\) such that
\begin{equation}\label{dk phi 2}
    \|P_{\Phi(u)}(w) -P_{\Phi(v)}(w)\| \leq \mu \|u-v\|, \quad \forall u, v, w\in \R^n.
\end{equation}
Assume that the solution set of IQVIP \eqref{iqvip} is nonempty. Let $u^*\in \R^n$ be a solution of IQVIP \eqref{iqvip}. Then 
\begin{enumerate}
    \item \label{bound for proj p1} The inequality holds 
    \begin{equation*}
        \|P_{\Phi(u)}(f(u)-\alpha u) -P_{\Phi(u^*)}(f(u^*)-\alpha u^*)\| \leq \left(\sqrt{L^2+\alpha^2-2\alpha\beta}+\mu\right)\|u-u^*\|.
    \end{equation*}
    \item \label{bound fo iner} One has 
    \begin{align*}
		&\langle u-u^*, f(u)-P_{\Phi(u)}(f(u)-\alpha u) \rangle      
        \geq (\beta-\sqrt{L^2+\alpha^2-2\alpha \beta}-\mu) \|u-u^*\|^2 >0,
	\end{align*}
    for all  \(u\neq u^*.\)
     \item \label{bound for operator} It holds that 
     \begin{equation*}
    \|f(u)-P_{\Phi(u)}(f(u)-\alpha u)\|\leq \left(L+\mu+\sqrt{L^2+\alpha^2-2\alpha\beta}\right) \|u-u^*\|.
    \end{equation*} 
    \item \label{part4} The following holds
    \begin{equation*}
    \|f(u)-P_{\Phi(u)}(f(u)-\alpha u)\|\geq \left|\beta -\mu-\sqrt{L^2+\alpha^2-2\alpha\beta}\right| \|u-u^*\|.
    \end{equation*}  
\end{enumerate}  
\end{lemma}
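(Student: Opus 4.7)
The plan is to reduce everything to a single identity: because $u^*$ solves the IQVIP, the projection characterization (Lemma on $P_C$) applied to the inequality $\langle u^*, y - f(u^*)\rangle \geq 0$ for all $y\in \Phi(u^*)$ yields the fixed-point equation
\[
f(u^*) = P_{\Phi(u^*)}\bigl(f(u^*) - \alpha u^*\bigr).
\]
Every one of the four estimates will then follow from the same decomposition
\[
f(u) - P_{\Phi(u)}(f(u) - \alpha u) = \bigl(f(u) - f(u^*)\bigr) + \bigl(P_{\Phi(u^*)}(f(u^*) - \alpha u^*) - P_{\Phi(u)}(f(u) - \alpha u)\bigr),
\]
combined with strong monotonicity, Lipschitz continuity, and part \eqref{bound for proj p1}.

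For part \eqref{bound for proj p1}, I would insert the intermediate term $P_{\Phi(u)}(f(u^*) - \alpha u^*)$ and split by the triangle inequality. The first piece is controlled by the $1$-Lipschitz property of $P_{\Phi(u)}$, which leaves me with $\|(f(u)-f(u^*)) - \alpha(u-u^*)\|$. Expanding the square, using $\beta$-strong monotonicity on the cross term and the $L$-Lipschitz bound on $\|f(u)-f(u^*)\|^2$ gives the coefficient $\sqrt{L^2+\alpha^2-2\alpha\beta}$. The second piece is controlled by assumption \eqref{dk phi 2}, yielding $\mu\|u-u^*\|$. Adding produces the required bound.

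For part \eqref{bound fo iner}, I would take the inner product of $u-u^*$ with the displayed decomposition. Strong monotonicity bounds $\langle u-u^*, f(u)-f(u^*)\rangle$ from below by $\beta\|u-u^*\|^2$, and Cauchy--Schwarz together with part \eqref{bound for proj p1} bounds the remaining term by $(\sqrt{L^2+\alpha^2-2\alpha\beta}+\mu)\|u-u^*\|^2$, which gives the claimed inequality (the strict positivity being a consequence of the underlying parameter assumption). Part \eqref{bound for operator} is the forward triangle inequality on the decomposition, with the $f$-piece bounded by $L\|u-u^*\|$ and the projection piece by part \eqref{bound for proj p1}. Part \eqref{part4} is the reverse triangle inequality on the same decomposition, using $\|f(u)-f(u^*)\|\geq \beta\|u-u^*\|$ (the inequality noted immediately after the definition of strong monotonicity in the Preliminaries) together with the upper bound from part \eqref{bound for proj p1}; the absolute value appears because the reverse triangle inequality yields $\bigl|\|f(u)-f(u^*)\| - \|P_{\Phi(u^*)}(f(u^*)-\alpha u^*)-P_{\Phi(u)}(f(u)-\alpha u)\|\bigr|$ and the two factors involved can be compared in either order.

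The only delicate point is part \eqref{bound fo iner}: one has to recognize that the inner product $\langle u-u^*, P_{\Phi(u^*)}(f(u^*)-\alpha u^*) - P_{\Phi(u)}(f(u)-\alpha u)\rangle$ is being subtracted, so a Cauchy--Schwarz application with part \eqref{bound for proj p1} is legitimate and sharp. Once the fixed-point identity $f(u^*) = P_{\Phi(u^*)}(f(u^*)-\alpha u^*)$ is in hand, the four items are mostly bookkeeping; I expect no real obstacle beyond verifying the correct signs of the $\beta$ and $\mu + \sqrt{L^2+\alpha^2-2\alpha\beta}$ terms in each direction.
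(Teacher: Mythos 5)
Your proposal is correct and follows essentially the same route as the paper: the fixed-point identity $f(u^*)=P_{\Phi(u^*)}(f(u^*)-\alpha u^*)$, the insertion of the intermediate term $P_{\Phi(u)}(f(u^*)-\alpha u^*)$ plus the nonexpansiveness of $P_{\Phi(u)}$ and condition \eqref{dk phi 2} for part \eqref{bound for proj p1}, and then Cauchy--Schwarz, the forward triangle inequality, and the reverse triangle inequality applied to the same decomposition for parts \eqref{bound fo iner}--\eqref{part4}. The only caveat, which you share with the paper's own proof, is that in part \eqref{part4} an upper bound on $\|P_{\Phi(u)}(f(u)-\alpha u)-P_{\Phi(u^*)}(f(u^*)-\alpha u^*)\|$ combined with a lower bound on $\|f(u)-f(u^*)\|$ controls the absolute difference only when $\beta\geq \mu+\sqrt{L^2+\alpha^2-2\alpha\beta}$, so the stated inequality with the absolute value is really established only in that regime --- harmless here, since the lemma is invoked under condition \eqref{dk cho beta}.
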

\begin{proof}
\eqref{bound for proj p1} Since $f(u^*)=P_{\Phi(u^*)}(f(u^*)-\alpha u^*)$, it follows that 
    \begin{align*}
        & \|P_{\Phi(u)}(f(u)-\alpha u) -P_{\Phi(u^*)}(f(u^*)-\alpha u^*)\|\\
        =& \|P_{\Phi(u)}(f(u)-\alpha u) -P_{\Phi(u)}(f(u^*)-\alpha u^*)+P_{\Phi(u)}(f(u^*)-\alpha u^*)-P_{\Phi(u^*)}(f(u^*)-\alpha u^*)\|.
        \end{align*}
\noindent Using triangular inequality and by the non-expansion property of the projector and condition \eqref{dk phi 2}, we have that 
  \begin{align*} 
         & \|P_{\Phi(u)}(f(u)-\alpha u) -P_{\Phi(u^*)}(f(u^*)-\alpha u^*)\| \\
     \leq & \|P_{\Phi(u)}(f(u)-\alpha u) -P_{\Phi(u)}(f(u^*)-\alpha u^*)\|+ \|P_{\Phi(u)}(f(u^*)-\alpha u^*)-P_{\Phi(u^*)}(f(u^*)-\alpha u^*)\|\\ 
        \leq & \|f(u)-f(u^*)-\alpha(u-u^*)\|+\mu\|u-u^*\| \notag \\
         = & \sqrt{\|f(u)-f(u^*)\|^2+\alpha^2\|u-u^*\|^2-2 \langle u-u^*, f(u)-f(u^*)\rangle}+\mu\|u-u^*\|\\
         \leq &\sqrt{L^2\|u-u^*\|\textcolor{blue}{^2}+\alpha^2\|u-u^*\|^2-2\alpha\beta\|u-u^*\|^2}+\mu\|u-u^*\| \notag\\
         = &\left(\sqrt{ (L^2+\alpha^2-2\alpha\beta)}+\mu \right)\|u-u^*\|. 
    \end{align*}
\eqref{bound fo iner}	For all \(u\in  \R^n\setminus \{u^*\},\) by Part \ref{bound for proj p1} of Lemma \ref{bound for proj} it holds that 
	\begin{align*}
		&\langle u-u^*, f(u)-P_{\Phi(u)}(f(u)-\alpha u) \rangle\\
        =& \langle u-u^*, f(u)-f(u^*)-(P_{\Phi(u)}(f(u)-\alpha u)-P_{\Phi(u^*)}(f(u^*)-\alpha u^*))\rangle\\
        =& \langle u-u^*, f(u)-f(u^*\rangle -\langle u-u^*, P_{\Phi(u)}(f(u)-\alpha u)-P_{\Phi(u^*)}(f(u^*)-\alpha u^*)\rangle\\
        \geq  & \beta\|u-u^*\|^2 - \|u-u^*\|\|P_{\Phi(u)}(f(u)-\alpha u)-P_{\Phi(u^*)}(f(u^*)-\alpha u^*)\|\\
        \geq  &\beta\|u-u^*\|^2 - (\sqrt{L^2+\alpha^2-2\alpha \beta}+\mu)\|u-u^*\|^2\\
        = & (\beta-\sqrt{L^2+\alpha^2-2\alpha \beta}-\mu) \|u-u^*\|^2 >0 \mbox{ for all } u\neq u^*.
	\end{align*}
	\eqref{bound for operator}  We have 
    \begin{align*}
        \|f(u)-P_{\Phi(u)}(f(u)-\alpha u)\| & = \|f(u)-f(u^*)- (P_{\Phi(u)}(f(u)-\alpha u)-P_{\Phi(u^*)}(f(u^*)-\alpha u^*))\|\\
        & \leq \|f(u)-f(u^*)\| + \|P_{\Phi(u)}(f(u)-\alpha u)-P_{\Phi(u^*)}(f(u^*)-\alpha u^*)\| \\
        & \leq L\|u-u^*\|+\sqrt{L^2+\alpha^2-2\alpha\beta}\|u-u^*\|\\
        &=\left(L+\mu+\sqrt{L^2+\alpha^2-2\alpha\beta}\right) \|u-u^*\|.
    \end{align*}
    \eqref{part4} By Part \ref{bound for proj p1} we have  \begin{align*}
        \|f(u)-P_{\Phi(u)}(f(u)-\alpha u)\| & = \|f(u)-f(u^*)- (P_{\Phi(u)}(f(u)-\alpha u)-P_{\Phi(u^*)}(f(u^*)-\alpha u^*))\|\\
        & \geq \left|\|f(u)-f(u^*)\| - \|P_{\Phi(u)}(f(u)-\alpha u)-P_{\Phi(u^*)}(f(u^*)-\alpha u^*)\| \right| \\
        & \geq \left| \beta\|u-u^*\| - \left(\sqrt{L^2+\alpha^2-2\alpha\beta}+\mu\right)\right|\|u-u^*\|\\
        &=\left|\beta- \left(\mu+\sqrt{L^2+\alpha^2-2\alpha\beta}\right)\right| \|u-u^*\|.
    \end{align*}

\end{proof}

\begin{lemma}\label{LL cont}
Let $\Phi: \R^n \longrightarrow 2^{\R^n} $  be a set-valued mapping with nonempty, closed and convex  values, and  \(f: \R^n \longrightarrow \R^n\) be an $L-$Lipschitz continuous and \(\beta-\)strongly monotone mapping. Assume further that there exists some \(\mu  > 0\) such that
\begin{equation}\label{dk phi 1}
    \|P_{\Phi(u)}(w) -P_{\Phi(v)}(w)\| \leq \mu \|u-v\|, \quad \forall u, v, w\in \R^n.
\end{equation}
Then the operator $T(u)\equiv f(u)- P_{\Phi(u)}(f(u)-\alpha u)$ is Lipschit continuous. 
\end{lemma}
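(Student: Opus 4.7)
The plan is to bound $\|T(u)-T(v)\|$ directly by the triangle inequality, splitting the problem into (i) the Lipschitz estimate on $f$ and (ii) the estimate on the difference of two projections onto \emph{different} sets evaluated at \emph{different} points. Writing
\[
T(u)-T(v) = \bigl[f(u)-f(v)\bigr] - \bigl[P_{\Phi(u)}(f(u)-\alpha u) - P_{\Phi(v)}(f(v)-\alpha v)\bigr],
\]
the first bracket is immediately controlled by the $L$-Lipschitz hypothesis, giving $\|f(u)-f(v)\|\le L\|u-v\|$.

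For the projection bracket, the key idea is to interpolate through the point $P_{\Phi(u)}(f(v)-\alpha v)$, which shares the set $\Phi(u)$ with the first term and the argument $f(v)-\alpha v$ with the second term. Then
\[
\|P_{\Phi(u)}(f(u)-\alpha u) - P_{\Phi(v)}(f(v)-\alpha v)\| \le A + B,
\]
where $A = \|P_{\Phi(u)}(f(u)-\alpha u) - P_{\Phi(u)}(f(v)-\alpha v)\|$ and $B = \|P_{\Phi(u)}(f(v)-\alpha v) - P_{\Phi(v)}(f(v)-\alpha v)\|$. The term $A$ is handled by nonexpansivity of $P_{\Phi(u)}$ together with a further triangle inequality and the Lipschitz property of $f$, yielding $A \le \|f(u)-f(v)\| + \alpha\|u-v\| \le (L+\alpha)\|u-v\|$. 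The term $B$ is exactly the situation covered by hypothesis \eqref{dk phi 1}, giving $B\le \mu\|u-v\|$.

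Combining the estimates produces
\[
\|T(u)-T(v)\| \le L\|u-v\| + (L+\alpha)\|u-v\| + \mu\|u-v\| = (2L+\alpha+\mu)\|u-v\|,
\]
so $T$ is Lipschitz continuous with constant at most $2L+\alpha+\mu$. There is no real obstacle here: neither the strong monotonicity constant $\beta$ nor the inequality $\sqrt{L^2+\alpha^2-2\alpha\beta}+\mu<\alpha$ from assumption (\textbf{A}) is needed; the only nontrivial step is the introduction of the intermediate projection so that condition \eqref{dk phi 1} (moving-set Lipschitz property) can be applied cleanly. I would present the proof in a single short chain of inequalities after the splitting above.
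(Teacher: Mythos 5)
Your proposal is correct and follows essentially the same route as the paper's proof: the same triangle-inequality splitting, the same intermediate projection $P_{\Phi(u)}(f(v)-\alpha v)$, nonexpansivity for the first piece, condition \eqref{dk phi 1} for the second, and the same final constant $2L+\alpha+\mu$. Your remark that neither $\beta$ nor the inequality from assumption (\textbf{A}) is needed is also accurate.
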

\begin{proof} By the non-expansion property of the projection operator and \eqref{dk phi 1} we have that  
    \begin{align*} 
    \|T(u)-T(v)\|=& \| f(u)- P_{\Phi(u)}(f(u)-\alpha u) -(f(v)- P_{\Phi(v)}(f(v)-\alpha v))\|\\
    \leq & \|f(u)-f(v)\|\\
    +&\|P_{\Phi(u)}(f(u)-\alpha u)-P_{\Phi(u)}(f(v)-\alpha v)+P_{\Phi(u)}(f(v)-\alpha v)-P_{\Phi(v)}(f(v)-\alpha v))\\
    \leq & L\|u-v\| + \|f(u)-f(v)\|+\alpha\|u-v\|+\mu \|u-v\|\\
    \leq & (2L+\alpha+\mu)\|u-v\|.
    \end{align*} 
    This implies that $T$ is Lipschit continuous.
\end{proof}

\section{Finite-time stability Analysis}\label{finite stability}
In this section we will analyze the finite-time stability for IQVIPs \eqref{iqvip}. By doing so we propose a new first order dynamical system associated with the problem \eqref{iqvip} such that a solution of \eqref{iqvip} becomes an equilibrium point of the dynamical system. Under mild conditions for the parameters, we demonstrate that the proposed dynamical system is finite-time stable.

We first recall a characterization for the finite-time stability of solutions of a dynamical system, given by  Bhat and Bernstein in  \cite{BHAT}. This result will play an important role in studying finite-time convergence.

\begin{theorem}\cite{BHAT} \label{lm1-finite}	(Lyapunov condition for finite-time stability). Assume that there exists 	a continuously differentiable function $V : D\rightarrow \R$, where $D \subseteq \R^n$  is a neighborhood of the	equilibrium point \(u^*\) for \eqref{JJS1}  and an open neighborhood $U\subseteq D$ of $u^*$ such that
\begin{equation*}
	\dot{V}(u) \leq -K.\big(V(u)\big)^p \quad \forall u\in U\setminus \{u^*\},
\end{equation*}
where $K>0$ and $p\in (0,1).$  Then, the equilibrium point \(u^*\) of \eqref{JJS1} is finite-time stable equilibrium point of \eqref{JJS1}. Moreover, the settling time $T$ satisfies 

\begin{equation*}
	T(u(0)) \leq \dfrac{V(u(0))^{1-p}}{K(1-p)} 
\end{equation*}
for any $u(0) \in U$.  

In addition, if $D =\R^n$, then the equilibrium point \(u^*\) of \eqref{JJS1} is globally finite-time stable.
\end{theorem}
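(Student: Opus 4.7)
The plan is to establish finite-time stability via a standard scalar comparison argument applied to the Lyapunov inequality $\dot V(u)\le -K V(u)^p$. Throughout I will assume the tacit positive-definiteness of $V$, namely $V(u^*)=0$ and $V(u)>0$ for $u\neq u^*$ in $D$; without this the statement carries no content, and it is implicit in calling $V$ a Lyapunov function.

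First I would establish Lyapunov stability of $u^*$. Because $\dot V(u)\le -K V(u)^p\le 0$ on $U\setminus\{u^*\}$, the composite $t\mapsto V(u(t))$ is non-increasing along every trajectory while it remains in $U$. The classical Lyapunov argument then gives, for any $\varepsilon>0$, a $\delta>0$ such that trajectories starting in $B(u^*,\delta)$ are trapped in the sublevel set $\{V\le m_\varepsilon\}\subset B(u^*,\varepsilon)$, where $m_\varepsilon:=\min_{\|u-u^*\|=\varepsilon}V(u)>0$, for all forward time. This simultaneously yields forward existence of the solution on $[0,\infty)$.

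Second, I would extract the settling-time bound by integrating the scalar inequality. Set $W(t):=V(u(t))$; while $W(t)>0$ the chain rule and the hypothesis give $\dot W(t)\le -K W(t)^p$. Since $p\in(0,1)$, multiplying by $W^{-p}$ and integrating from $0$ to $t$ yields
\begin{equation*}
\frac{W(t)^{1-p}-V(u(0))^{1-p}}{1-p}\le -K\,t,
\end{equation*}
i.e.\ $W(t)^{1-p}\le V(u(0))^{1-p}-K(1-p)\,t$. Letting $t_0:=\inf\{t\ge 0:W(t)=0\}$, this forces $t_0\le V(u(0))^{1-p}/[K(1-p)]$, and by continuity $W(t_0)=0$, hence $u(t_0)=u^*$. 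After reaching the equilibrium, $u\equiv u^*$ is the unique continuous continuation (either by the standard uniqueness argument or directly by the monotonicity of $V$, which cannot then increase away from zero). The settling-time bound follows, and the global statement follows verbatim once $D=\R^n$, because the comparison argument never used local confinement beyond the monotonicity of $V$, which now holds on all of $\R^n$.

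The main subtlety is the legitimacy of the separation-of-variables step when $W$ can touch zero: naive division by $W^p$ is only valid on $\{W>0\}$. The cleanest fix is the infimum $t_0$ above, combined with continuity of $W$, which converts the scalar differential inequality into the sharp time bound without ever dividing by zero. A secondary issue is ensuring the trajectory $u(t)$ stays in $U$ throughout $[0,t_0]$ so that the hypothesized inequality applies pointwise; this is exactly what the Lyapunov stability step supplies by confining the trajectory to a ball inside $U$. Everything else is a routine manipulation of the scalar ODE $\dot w=-Kw^p$.
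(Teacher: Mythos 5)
The paper does not prove this result; it is quoted verbatim from Bhat and Bernstein \cite{BHAT}, so there is no in-paper argument to compare against. Your proof is the standard comparison argument for the scalar inequality $\dot W\le -KW^{p}$ with $p\in(0,1)$, and it is sound: you correctly supply the tacitly omitted positive-definiteness of $V$ (without which the statement is vacuous), you handle the only delicate points — dividing by $W^{p}$ only on $\{W>0\}$ and then passing to $t_0=\inf\{t:W(t)=0\}$ by continuity, confining the trajectory to a sublevel set inside $U$ so the hypothesis applies along the whole trajectory, and using monotonicity of $V$ to keep the solution at $u^*$ after $t_0$ despite $T$ being merely continuous (so that forward uniqueness is not available for free). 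The one caveat worth recording is that, as you implicitly note, the settling-time bound is really justified only for initial conditions in a (possibly smaller) neighborhood on which trajectories remain in $U$; the paper's phrase ``for any $u(0)\in U$'' is slightly looser than what the argument delivers, but this matches the original formulation in \cite{BHAT} and does not affect how the theorem is used later (with $D=U=\R^n$).
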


In order to obtain a finite-time stability of solutions of inverse quasi-variational inequality problem \eqref{iqvip} we now present a novel first-order dynamical system associated with the problem \eqref{iqvip}.  Our proposed dynamical system is as follows: 
\begin{equation}\label{newydynamicalsystem-finite} 
\dot{u}(t)=-\sigma  \dfrac{f(u)-P_{\Phi(u)}(f(u)-\alpha u)}{\|f(u)-P_{\Phi(u)}(f(u)-\alpha u)\|^{\frac{\gamma-2}{\gamma-1}}}, 
\end{equation} 
where 
$\sigma>0$  is a scalar tuning gain, and $\gamma>2, \alpha>0$ are two design parameters. 

It is worth noting that for all $u\in \R^n$ such that $f(u)- P_{\Phi(u)}(f(u)-\alpha u)=0$, the expression on the right-hand side of \eqref{newydynamicalsystem-finite} is still well defined for all $\gamma>2$. In fact, it is equal to zero. Indeed,  we have 
\begin{equation*}
   \left\|  \dfrac{f(u)-P_{\Phi(u)}(f(u)-\alpha u)}{\|f(u)-P_{\Phi(u)}(f(u)-\alpha u)\|^{\frac{\gamma-2}{\gamma-1}}}\right\|= \|f(u)-P_{\Phi(u)}(f(u)-\alpha u)\|^{\frac{1}{\gamma-1}}=0.
\end{equation*}

The following result establishes the connection between the equilibrium points of the previously mentioned dynamical system and those of \eqref{firstdynamicalsystem}.
\begin{proposition}\label{pro3-finite}
A point $u^*\in \R^n$ is an equilibrium point of \eqref{newydynamicalsystem-finite} if it is also an equilibrium point of \eqref{firstdynamicalsystem}, and vice versa. 
\end{proposition}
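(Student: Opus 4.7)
The plan is to observe that, up to positive scalar factors, both dynamical systems have a right-hand side that vanishes exactly when the common quantity
\[
H(u)\;:=\;f(u)-P_{\Phi(u)}\bigl(f(u)-\alpha u\bigr)
\]
vanishes. So the whole proposition reduces to checking that ``RHS of \eqref{firstdynamicalsystem} $=0$'' and ``RHS of \eqref{newydynamicalsystem-finite} $=0$'' are each equivalent to $H(u^*)=0$.

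The forward direction is immediate: if $u^*$ is an equilibrium of \eqref{firstdynamicalsystem}, then $-\sigma(t)H(u^*)=0$; since $\sigma(t)>0$, we get $H(u^*)=0$. Substituting into \eqref{newydynamicalsystem-finite} and using the norm computation displayed just after the definition of the system,
\[
\left\|\dfrac{H(u^*)}{\|H(u^*)\|^{(\gamma-2)/(\gamma-1)}}\right\|=\|H(u^*)\|^{1/(\gamma-1)}=0,
\]
confirms that $u^*$ is also an equilibrium of \eqref{newydynamicalsystem-finite}.

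For the reverse direction, suppose $u^*$ is an equilibrium of \eqref{newydynamicalsystem-finite}. Taking norms on both sides of the defining identity gives $\sigma\,\|H(u^*)\|^{1/(\gamma-1)}=0$. Since $\sigma>0$ and $\gamma>2$ (so the exponent $1/(\gamma-1)$ is a positive real number), we obtain $\|H(u^*)\|=0$, hence $H(u^*)=0$. Plugging this into \eqref{firstdynamicalsystem} shows its right-hand side vanishes at $u^*$, i.e., $u^*$ is an equilibrium of \eqref{firstdynamicalsystem}.

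There is no real obstacle here; the only subtle point is the apparent $0/0$ in \eqref{newydynamicalsystem-finite}, which the paper has already addressed by assigning it the value $0$ through the norm computation above. So the proof is essentially one paragraph plus the two-line norm identity, and does not require assumption \textbf{(A)} or Lemma \ref{bound for proj}.
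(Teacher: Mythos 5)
Your proof is correct and follows essentially the same route as the paper: both arguments reduce each equilibrium condition to the vanishing of $f(u^*)-P_{\Phi(u^*)}(f(u^*)-\alpha u^*)$ by noting that the norm of the right-hand side of \eqref{newydynamicalsystem-finite} equals $\sigma\|f(u^*)-P_{\Phi(u^*)}(f(u^*)-\alpha u^*)\|^{1/(\gamma-1)}$ with a positive exponent. Your observation that neither assumption \textbf{(A)} nor Lemma \ref{bound for proj} is needed is also consistent with the paper's proof.
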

\begin{proof} Observe that  $u^*\in \R^n$ is an equilibrium point of \eqref{newydynamicalsystem-finite} if and only if 
$$\dfrac{f(u^*)-P_{\Phi(u)}(f(u^*)-\alpha u^*)}{\|f(u^*)-P_{\Phi(u)}(f(u^*)-\alpha u^*)\|^{\frac{\gamma-2}{\gamma-1}}}= 0.$$  It holds if and only if   $$  \dfrac{\|f(u^*)-P_{\Phi(u)}(f(u^*)-\alpha u^*)\|}{\|f(u^*)-P_{\Phi(u)}(f(u^*)-\alpha u^*)\|^{\frac{\gamma-2}{\gamma-1}}} =0.$$ Again, this equation is fulfilled if and only if 
\begin{equation}\label{equ80}
	\|f(u^*)-P_{\Phi(u)}(f(u^*)-\alpha u^*)\|^{1-\frac{\gamma -2}{\gamma -1}}=0.
\end{equation}
Because $1-\frac{\gamma -2}{\gamma -1}=\frac{1}{\gamma-1}>0$ for all $\gamma>2$, the equality \eqref{equ80} holds if and only if  $$f(u^*)-P_{\Phi(u)}(f(u^*)-\alpha u^*)=0,$$ or $$f(u^*)=P_{\Phi(u)}(f(u^*)-\alpha u^*).$$ This holds if and only if  $u^*$ is an equilibrium point of \eqref{firstdynamicalsystem}. The proof is thus completed.  
\end{proof}

From Proposition \ref{pro3-finite}, we derive the following direct consequence:
\begin{corollary}\label{pro4} 
	Let $\Phi: \R^n \longrightarrow 2^{\R^n} $  be a set-valued mapping with nonempty, closed, and convex values, and \(f: \R^n \longrightarrow \R^n\) be a single mapping.  Then \(u^*\) is a solution to the inverse quasi-variational inequality problem \eqref{iqvip} if and only if it is an equilibrium point of the dynamical system \eqref{newydynamicalsystem-finite}. 
\end{corollary}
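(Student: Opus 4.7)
The plan is to chain together two results that have just been established in the paper, so that the corollary becomes a one-line transitivity argument. Specifically, I would invoke Remark \ref{re 1} (from \cite{Dey}) which states that $u^* \in \R^n$ solves the IQVIP \eqref{iqvip} if and only if $u^*$ is an equilibrium point of the nominal dynamical system \eqref{firstdynamicalsystem}, and then use Proposition \ref{pro3-finite} which identifies the equilibrium points of \eqref{newydynamicalsystem-finite} with those of \eqref{firstdynamicalsystem}.

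Concretely, the proof would proceed as follows. First I would fix $u^* \in \R^n$ and apply Remark \ref{re 1}: $u^*$ solves \eqref{iqvip} exactly when $f(u^*) = P_{\Phi(u^*)}(f(u^*) - \alpha u^*)$, i.e.\ exactly when $u^*$ is an equilibrium point of \eqref{firstdynamicalsystem}. Second, I would apply Proposition \ref{pro3-finite} to convert this into the statement that $u^*$ is an equilibrium point of \eqref{newydynamicalsystem-finite}. Combining the two equivalences yields the claimed biconditional.

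There is essentially no obstacle here, since both ingredients are already proved. The only minor point worth noting is that Proposition \ref{pro3-finite} requires $\gamma > 2$ (so that the exponent $1 - \frac{\gamma-2}{\gamma-1} = \frac{1}{\gamma-1}$ is strictly positive), which is a standing hypothesis on the design parameter of the dynamical system \eqref{newydynamicalsystem-finite}. No further assumption on $f$ or $\Phi$ beyond the ones stated in the corollary is needed, because the equivalence between solutions of IQVIP and zeros of the residual $f(u)-P_{\Phi(u)}(f(u)-\alpha u)$ is purely a fixed-point characterization, not a stability assertion. Thus the corollary follows directly, and I would present the proof in two short sentences citing Remark \ref{re 1} and Proposition \ref{pro3-finite}.
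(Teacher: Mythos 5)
Your proposal matches the paper's own proof exactly: the authors also obtain Corollary \ref{pro4} as a direct consequence of Proposition \ref{pro3-finite} combined with Remark \ref{re 1}. Your additional observation about the exponent $\frac{1}{\gamma-1}>0$ for $\gamma>2$ is a correct (and already-proved) ingredient of Proposition \ref{pro3-finite}, so nothing further is needed.
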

\begin{proof}
It is a direct consequence of  Proposition \ref{pro3-finite} and Remark \ref{re 1}. 
\end{proof}

The next main result gives the finite-time stability of the dynamical system \eqref{newydynamicalsystem-finite}.

\begin{theorem}\label{tr2-fin}  Let $\Phi: \R^n \longrightarrow 2^{\R^n} $  be a set-valued mapping with nonempty, closed, and convex values and \(f: \R^n \longrightarrow \R^n\) be such that conditions in assumption {\bf (A)} hold. Assume further that 
\begin{equation}\label{dk cho beta}
    \sqrt{L^2+\alpha^2-2\alpha\beta}+\mu<\beta.
\end{equation}
Then,  the solution \(u^*\in \R^n\) of the inverse quasi-variational inequality problem  \eqref{iqvip} is a globally finite-time stable equilibrium point of \eqref{newydynamicalsystem-finite} with a settling time 
$$T(u(0))\leq T_{\max}=\dfrac{1}{2^{1-p}}\dfrac{(\|u(0)-u^*\|)^{2(1-p)}}{K(1-p)}$$
for some constants $K>0$ and $p \in (0.5, 1)$. 
\end{theorem}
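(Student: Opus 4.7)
\medskip
\noindent\textbf{Proof proposal.} The plan is to invoke the Lyapunov-type criterion of Theorem \ref{lm1-finite} with the natural quadratic candidate $V(u)=\tfrac12\|u-u^*\|^2$, which vanishes only at the unique equilibrium $u^*$ (existence and uniqueness are guaranteed by Theorem \ref{unique sol}, and $u^*$ is an equilibrium of \eqref{newydynamicalsystem-finite} by Corollary \ref{pro4}). Since the right-hand side of \eqref{newydynamicalsystem-finite} is continuous (the offending norm in the denominator is raised to an exponent strictly less than that in the numerator, so the singularity cancels and the vector field vanishes at $u^*$), along any trajectory we may differentiate $V$ and write
\begin{equation*}
\dot V(u)=\langle u-u^*,\dot u\rangle
=-\sigma\,\frac{\langle u-u^*,\,f(u)-P_{\Phi(u)}(f(u)-\alpha u)\rangle}{\|f(u)-P_{\Phi(u)}(f(u)-\alpha u)\|^{\frac{\gamma-2}{\gamma-1}}}.
\end{equation*}

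The next step is to apply the two key inequalities from Lemma \ref{bound for proj}: part \eqref{bound fo iner} bounds the numerator from below by $c\|u-u^*\|^2$ with $c:=\beta-\sqrt{L^2+\alpha^2-2\alpha\beta}-\mu$, which is strictly positive by the extra assumption \eqref{dk cho beta}; part \eqref{bound for operator} bounds the quantity inside the denominator from above by $M\|u-u^*\|$ with $M:=L+\mu+\sqrt{L^2+\alpha^2-2\alpha\beta}$. Combining these, for every $u\neq u^*$,
\begin{equation*}
\dot V(u)\ \le\ -\,\frac{\sigma c}{M^{\frac{\gamma-2}{\gamma-1}}}\,\|u-u^*\|^{\,2-\frac{\gamma-2}{\gamma-1}}
\ =\ -\,\tilde K\,\|u-u^*\|^{\frac{\gamma}{\gamma-1}},
\end{equation*}
where I used the identity $2-\tfrac{\gamma-2}{\gamma-1}=\tfrac{\gamma}{\gamma-1}$ and set $\tilde K:=\sigma c\,M^{-(\gamma-2)/(\gamma-1)}>0$.

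Rewriting in terms of $V$ via $\|u-u^*\|^{\gamma/(\gamma-1)}=(2V(u))^{p}$ with $p:=\tfrac{\gamma}{2(\gamma-1)}$, this yields
\begin{equation*}
\dot V(u)\ \le\ -K\,V(u)^{p}, \qquad K:=2^{p}\tilde K,
\end{equation*}
and one checks easily that $\gamma>2$ forces $p\in(1/2,1)$. Theorem \ref{lm1-finite} then applies on the whole of $\R^n$ (so $u^*$ is globally finite-time stable), and delivers the settling-time bound
\begin{equation*}
T(u(0))\ \le\ \frac{V(u(0))^{1-p}}{K(1-p)}\ =\ \frac{1}{2^{1-p}}\cdot\frac{\|u(0)-u^*\|^{2(1-p)}}{K(1-p)},
\end{equation*}
which is exactly the claimed $T_{\max}$.

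The calculations are essentially routine once one has Lemma \ref{bound for proj}, so the main conceptual hurdle is the bookkeeping that turns the ratio in $\dot V$ into a clean power of $V$ with exponent in $(0,1)$; in particular, one must verify that the exponent $p=\gamma/(2(\gamma-1))$ indeed lies in $(1/2,1)$ for every $\gamma>2$, which both makes the Bhat--Bernstein criterion applicable and yields the precise $(1-p)$ factors appearing in $T_{\max}$. A secondary point worth a brief remark is the well-posedness of \eqref{newydynamicalsystem-finite}: the field is continuous (as observed right after the statement of the system) though not Lipschitz at $u^*$, which is exactly the regularity framework that Bhat--Bernstein is designed for, so no extra machinery is required.
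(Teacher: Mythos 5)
Your proposal is correct and follows the same overall strategy as the paper: the quadratic Lyapunov function $V(u)=\tfrac12\|u-u^*\|^2$, the lower bound on the inner product from Part \eqref{bound fo iner} of Lemma \ref{bound for proj}, a bound on the denominator $\|f(u)-P_{\Phi(u)}(f(u)-\alpha u)\|^{\frac{\gamma-2}{\gamma-1}}$ in terms of $\|u-u^*\|$, and then Theorem \ref{lm1-finite} with $p=\frac{\gamma}{2(\gamma-1)}\in(1/2,1)$.

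The one place where you diverge from the paper is worth highlighting, because your choice is the logically correct one. To pass from
\begin{equation*}
\dot V \le -\sigma\,c\,\frac{\|u-u^*\|^2}{\|f(u)-P_{\Phi(u)}(f(u)-\alpha u)\|^{\frac{\gamma-2}{\gamma-1}}},\qquad c:=\beta-\sqrt{L^2+\alpha^2-2\alpha\beta}-\mu>0,
\end{equation*}
to a bound of the form $-\tilde K\|u-u^*\|^{\gamma/(\gamma-1)}$, one must replace the denominator by something \emph{larger}, i.e.\ use the \emph{upper} bound of Part \eqref{bound for operator}, $\|f(u)-P_{\Phi(u)}(f(u)-\alpha u)\|\le M\|u-u^*\|$ with $M=L+\mu+\sqrt{L^2+\alpha^2-2\alpha\beta}$, exactly as you do, yielding $\tilde K=\sigma c\,M^{-(\gamma-2)/(\gamma-1)}$. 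The paper instead invokes the \emph{lower} bound of Part \eqref{part4}, which makes the fraction smaller and hence produces an inequality in the wrong direction; its constant $\sigma c^{1/(\gamma-1)}\cdot c^{(\gamma-1-\gamma+2)/(\gamma-1)}=\sigma c^{\gamma/(\gamma-1)}$ is larger than your $\tilde K$ (since $c\le M$) and is not actually justified by the displayed chain. Since the theorem only asserts the existence of \emph{some} $K>0$, your corrected constant delivers the stated conclusion and settling-time formula verbatim; your closing remarks on $p\in(1/2,1)$ for $\gamma>2$ and on the mere continuity (not Lipschitzness) of the vector field at $u^*$ are also accurate and appropriately placed.
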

\begin{proof} 

Consider the Lyapunov function \(V : \R^n \rightarrow \R\) defined as follows:
\[V(u):=\dfrac{1}{2}\|u-u^*\|^2.\]
Differentiating  with respect to time of the Lyapunov function \(V\) along the solution of \eqref{newydynamicalsystem-finite}, starting from any initial condition \(u(0)\in \R^n\setminus\{u^*\}\), where $u^*\in \mbox{\rm Zer}(T)$ is unique with $T(u)=f(u)-P_{\Phi(u)}(f(u)-\alpha u)$,   yields:
\begin{align*}
	\dot{V}=&-\left\langle u-u^*, \sigma  \dfrac{f(u)-P_{\Phi(u)}(f(u)-\alpha u)}{\|f(u)-P_{\Phi(u)}(f(u)-\alpha u)\|^{\frac{\gamma-2}{\gamma-1}}} \right\rangle \notag\\
	=& -\sigma   \dfrac{\left\langle u-u^*, f(u)-P_{\Phi(u)}(f(u)-\alpha u) \right\rangle} {\|f(u)-P_{\Phi(u)}(f(u)-\alpha u)\|^{\frac{\gamma-2}{\gamma-1}}}. 
\end{align*}
By using Part \ref{bound fo iner} of Lemma \ref{bound for proj} it follows that 
\begin{align*}
	\dot{V} &\leq  -\sigma (\beta-\sqrt{L^2+\alpha^2-2\alpha\beta}-\mu)   \dfrac{\| u-u^*\|^2} {\|f(u)-P_{\Phi(u)}(f(u)-\alpha u)\|^{\frac{\gamma-2}{\gamma-1}}} 
\end{align*}
for all $u\in \R^n\setminus \{u^*\}$. 

By using Part \ref{part4} of Lemma \ref{bound for proj} we have that 
\begin{align}\label{eq16-finite1}
	\dot{V} \leq &  -\sigma \dfrac{ (\beta-\sqrt{L^2+\alpha^2-2\alpha\beta}-\mu)}{(\beta-\sqrt{L^2+\alpha^2-2\alpha\beta}-\mu)^{\frac{\gamma-2}{\gamma-1}} }  \dfrac{\| u-u^*\|^2} { \|u-u^*\|^{\frac{\gamma-2}{\gamma-1}}}\\ \notag
    =&-\sigma (\beta-\sqrt{L^2+\alpha^2-2\alpha\beta}-\mu)^{\frac{\gamma}{\gamma-1}} \|u-u^*\|^{\frac{\gamma}{\gamma-1}}.\label{eq16-finite1} 
\end{align}
Setting \(M:= \sigma (\beta-\sqrt{L^2+\alpha^2-2\alpha\beta}-\mu)^{\frac{\gamma}{\gamma-1}}\). Then by condition \eqref{dk cho beta} it holds that  $M>0$ and inequality  \eqref{eq16-finite1} can be read as. 
\begin{align*}
	\dot{V} \leq -M  \|u-u^*\|^{\frac{\gamma}{\gamma-1}} =- M.2^{\frac{\gamma}{2(\gamma-1)}} \left(\dfrac{1}{2}\|u(0)-u^*\|^2\right)^{\frac{\gamma}{2(\gamma-1)}}.
\end{align*}
By applying  Theorem \ref{lm1-finite} with noting that $0.5<p=\frac{1}{2} \cdot \frac{\gamma}{\gamma-1}<1$ for all $\gamma>2$ and $K=M.2^{p}>0$ we obtain the conclusion. The proof is completed.
\end{proof}
\section{ Fixed-time stability Analysis} 	\label{sec3}
This section is for analyzing the fixed-time stability of solutions of IQVIPs \eqref{iqvip}.  We will introduce another projection dynamical system for the problem \eqref{iqvip}.  Next, we characterize the solutions of inverse quasi-variational inequality problems through equilibrium points of the proposed dynamical system. Finally, we establish the global fixed-time convergence of the proposed dynamical system.

Again, we recall here a result on fixed-stable time stability given by  Polyakov obtained in  \cite{polyakov}, which plays a fundamental role in analyzing fixed-time stability. 
	\begin{theorem} \label{lm1}	(Lyapunov condition for fixed-time stability). Assume that there exists 	a continuously differentiable function $V : D\rightarrow \R$, where $D \subseteq H$  is a neighborhood of the	equilibrium point \(u^*\) for \eqref{JJS1} such that
		\begin{equation*}
			V(u^*) = 0, V(u) > 0
		\end{equation*}
		for all $u \in  D \setminus \{u^*\}$ and
		\begin{equation} \label{inq of Lyapunov} 
			\dot{V}(u) \leq  -(s_1V(u)^{p_1} +s_2V(u)^{p_2})^{p_3}
		\end{equation}
		for all $u \in D \setminus \{u^*\}$ with $s_1,s_2,p_1,p_2,p_3 > 0$ such that $p_1p_3 <1$ and $p_2p_3 >1.$  Then, the equilibrium point \(u^*\) of \eqref{JJS1} is fixed-time stable with settling time function 
		\begin{equation*}\label{time1}
			T(u(0)) \leq \dfrac{1}{s_1^{p_3}(1-p_1p_3)} +\dfrac{1}{s_2^{p_3}(p_2p_3-1)}
		\end{equation*}
		for any $u(0) \in H$.  
		Moreover, take $p_1=\left(1-\frac{1}{2\nu}\right), p_2=\left(1+\frac{1}{2\nu}\right),$ with $\nu>1$ and $p_3=1$ in \eqref{inq of Lyapunov}. Then the equilibrium point of \eqref{JJS1} is fixed-time stable with the settling time 
		\begin{equation*}\label{time2}
			T(u(0))\leq T_{\max } =\dfrac{\pi\nu}{\sqrt{s_1s_2}}.
		\end{equation*}

		In addition, if the function $V$ is radially unbounded (i.e., $||u||\to \infty \Rightarrow V(u)\to \infty$) and $D =\R^n$, then the equilibrium point \(u^*\) of \eqref{JJS1} is globally fixed-time stable.
	\end{theorem}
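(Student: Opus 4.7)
The plan is to reduce the vector dynamical system \eqref{JJS1} to a scalar differential inequality for the Lyapunov function $V$ along trajectories, and then analyze that scalar ODE explicitly via separation of variables. Along any solution $u(t)$ with $u(0)\neq u^*$, the hypothesis gives
\[ \dot V(u(t))\le -\bigl(s_1 V(u(t))^{p_1}+s_2 V(u(t))^{p_2}\bigr)^{p_3}<0, \]
so $V\circ u$ is strictly decreasing while it is positive. A standard comparison argument shows that if $y(t)$ solves the scalar initial-value problem $\dot y=-(s_1 y^{p_1}+s_2 y^{p_2})^{p_3}$ with $y(0)=V(u(0))$, then $V(u(t))\le y(t)$ for as long as both exist, so it suffices to bound the time at which $y$ reaches zero. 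Lyapunov stability of $u^*$ will follow from the positive definiteness of $V$ combined with $\dot V<0$ on $D\setminus\{u^*\}$.

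To bound the settling time, I would separate variables in the scalar ODE and write
\[ T(u(0))\le \int_0^{V(u(0))}\frac{dr}{(s_1 r^{p_1}+s_2 r^{p_2})^{p_3}}. \]
The key step is to split this integral at $r=1$ and estimate each piece by the dominant term. Since $p_3>0$, monotonicity of $x\mapsto x^{p_3}$ gives $(s_1 r^{p_1}+s_2 r^{p_2})^{p_3}\ge s_1^{p_3} r^{p_1 p_3}$ on $(0,1]$ and $(s_1 r^{p_1}+s_2 r^{p_2})^{p_3}\ge s_2^{p_3} r^{p_2 p_3}$ on $[1,\infty)$. The assumption $p_1 p_3<1$ makes $\int_0^1 r^{-p_1 p_3}\,dr$ convergent with value $1/(s_1^{p_3}(1-p_1 p_3))$, and $p_2 p_3>1$ makes $\int_1^{\infty} r^{-p_2 p_3}\,dr$ convergent with value $1/(s_2^{p_3}(p_2 p_3-1))$. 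Adding them yields the initial-condition-independent bound asserted in the theorem. Global fixed-time stability when $D=\R^n$ then follows from radial unboundedness of $V$: the sublevel sets $\{V\le c\}$ are bounded, so the trajectory cannot escape to infinity in finite time and the comparison argument applies on all of $\R^n$.

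For the sharper special case $p_1=1-\tfrac{1}{2\nu}$, $p_2=1+\tfrac{1}{2\nu}$, $p_3=1$, I would avoid splitting and instead evaluate the integral exactly through the substitution $r=w^{2\nu}$, which simplifies $s_1 r^{p_1}+s_2 r^{p_2}=w^{2\nu-1}(s_1+s_2 w^2)$ and collapses the integrand to $2\nu\,dw/(s_1+s_2 w^2)$; letting the upper limit tend to $\infty$ yields the arctangent bound $\pi\nu/\sqrt{s_1 s_2}$. The main technical obstacle I anticipate is justifying that the trajectory actually attains $u^*$ in finite time (rather than merely converging asymptotically), which reduces to verifying integrability of the integrand at $r=0$; this is exactly ensured by $p_1 p_3<1$ through the lower bound $s_1^{p_3} r^{p_1 p_3}$. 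Once that finite-time contact is secured, the comparison principle delivers both the uniform settling-time estimate and the claimed bounds.
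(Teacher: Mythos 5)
The paper does not prove this statement at all: it is quoted verbatim from Polyakov's work (cited as \cite{polyakov}) and used as a black box, so there is no internal proof to compare against. Your argument is the standard proof of Polyakov's lemma and it is correct. The comparison with the scalar ODE $\dot y=-(s_1y^{p_1}+s_2y^{p_2})^{p_3}$, the separation-of-variables bound $T(u(0))\le\int_0^{V(u(0))}\bigl(s_1r^{p_1}+s_2r^{p_2}\bigr)^{-p_3}\,dr$, and the split at $r=1$ using the elementary lower bounds $s_1^{p_3}r^{p_1p_3}$ and $s_2^{p_3}r^{p_2p_3}$ give exactly $\tfrac{1}{s_1^{p_3}(1-p_1p_3)}+\tfrac{1}{s_2^{p_3}(p_2p_3-1)}$, uniformly in $u(0)$, which is the asserted fixed-time bound; and your substitution $r=w^{2\nu}$ in the case $p_3=1$, $p_{1,2}=1\mp\tfrac{1}{2\nu}$ correctly collapses the integrand to $2\nu/(s_1+s_2w^2)$ and yields $\pi\nu/\sqrt{s_1s_2}$. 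The only points worth tightening in a full write-up are (i) the comparison principle itself (one should argue that $V(u(t))>0$ cannot persist past time $T_{\max}$, e.g.\ by integrating $dV/g(V)\le -dt$ while $V>0$, since $g(r)=(s_1r^{p_1}+s_2r^{p_2})^{p_3}$ is only locally Lipschitz away from $0$ and the scalar ODE loses uniqueness at $y=0$, which is harmless for an upper bound but should be said), and (ii) in the global case, forward invariance of the compact sublevel sets of $V$ guarantees completeness of trajectories before the settling-time estimate is invoked; you mention both, so the outline is sound.
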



To obtain the fixed-time stability for solutions to inverse quasi-variational inequality problems  \eqref{iqvip} we construct the following dynamical system:
\begin{equation}\label{newydynamicalsystem} 
	\dot{u}=-\psi(u)(f(u)-P_{\Phi(u)}(f(u)-\alpha u)),
\end{equation} 
where $\alpha>0$ is any positive number and 
\begin{equation}\label{dnrho} 
	\psi(u):=\begin{cases} 
		a_1\dfrac{1}{\|f(u)-P_{\Phi(u)}(f(u)-\alpha u)\|^{1-r_1}}& +a_2\dfrac{1}{\|f(u)-P_{\Phi(u)}(f(u)-\alpha u)\|^{1-r_2}} \\
		& \mbox{ if } u\in  \R^n\setminus {\rm Zer}(T)\\
		0 & \mbox{otherwise} 
	\end{cases}
\end{equation} 
with \(a_1, a_2>0, r_1\in (0, 1)\) and \(r_2>1\), $T(\cdot)=f(\cdot)-P_\Phi(\cdot )(f(\cdot)- \alpha. id(\cdot)),$ here $id(\cdot)$ is the identity operator. 
\begin{remark}\label{pro3}
A point $ u^*\in  \R^n$ is an equilibrium point of \eqref{newydynamicalsystem} if it is also an equilibrium point of \eqref{firstdynamicalsystem}, and vice versa. 
\end{remark}
Indeed,  because  \(\psi(u)=0\) if and only if \(u\in  \text{\rm Zer}(T)\), the conclusion follows from the definition \eqref{dnrho} of $\psi$. 

Combining  Remark \ref{re 1} and Remark \ref{pro3}  we have a characterization for solutions of the inverse quasi-variational inequality problem \eqref{iqvip} through equilibrium points of the dynamical system \eqref{newydynamicalsystem} as follows:

\begin{remark}\label{pro4-fixed}
	A point \(u^*\in \R^n\) is a solution of inverse quasi-variational inequality problem \eqref{iqvip} if and only if it is an equilibrium point of the dynamical system \eqref{newydynamicalsystem}. 
\end{remark}

The following lemma gives conditions so that solutions of the given dynamical system exist and are uniquely determined in the classical sense.

\begin{lemma}\cite{Garg21}\label{prop.1}
	Let $T: \R^n \rightarrow \R^n$ be a locally Lipschitz continuous vector field such that 
	$$	T(\bar{u}) = 0 \mbox{ and } \langle u -\bar{u}, T(u) \rangle  > 0$$
	for all $u \in  \R^n\setminus\{\bar{u}\}$. Consider the following autonomous differential equation:
	\begin{equation}\label{system3}
		\dot{u}(t)=-c(u(t)) T(u(t)),
	\end{equation}
	where 
	\begin{equation*}
		c(u):=\begin{cases}
			a_1\dfrac{1}{\|T(u)\|^{1-r_1}}+a_2\dfrac{1}{\|T(u))\|^{1-r_2}} & \mbox{ if } T(u)\neq 0\\ 
			0 & \mbox{otherwise} 
		\end{cases}
	\end{equation*} 
	with \(a_1, a_2>0, r_1\in (0, 1)\) and \(r_2>1\). Then, with any given initial condition, the solution of \eqref{system3} exists in the classical sense and is uniquely determined for all $t\geq 0$. 
\end{lemma}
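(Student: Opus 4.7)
The plan is to combine standard ODE theory on the open set $\R^n \setminus \{\bar u\}$, where the vector field $g(u) := -c(u)T(u)$ is locally Lipschitz, with a Lyapunov argument at the equilibrium $\bar u$, where $g$ is only continuous because of the singular factor $\|T(u)\|^{r_1-1}$ with $r_1 \in (0,1)$. First I would verify that $g$ extends continuously to all of $\R^n$ via the estimate
\begin{equation*}
\|c(u)T(u)\| \le a_1 \|T(u)\|^{r_1} + a_2 \|T(u)\|^{r_2},
\end{equation*}
which vanishes as $u\to\bar u$ because $r_1,r_2>0$ and $T(\bar u)=0$; hence setting $g(\bar u)=0$ makes $g$ continuous. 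Away from $\bar u$, the scalar $c(u)$ is a smooth function of $T(u)$ whose norm is locally bounded below, so $c$, and therefore $g$, is locally Lipschitz there.

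Next I would treat initial conditions $u_0 \neq \bar u$. Picard--Lindel\"{o}f in a neighborhood of $u_0$ yields a unique local solution. To extend it globally, I introduce the Lyapunov candidate $V(u) := \tfrac12\|u-\bar u\|^2$ and compute, along any piece where $u(t) \neq \bar u$,
\begin{equation*}
\dot V(u(t)) \;=\; -c(u(t))\,\bigl\langle u(t)-\bar u,\; T(u(t))\bigr\rangle \;<\; 0,
\end{equation*}
using $c(u)>0$ together with the monotonicity hypothesis. Thus $V$ is non-increasing, the trajectory stays in the compact ball $\{v:\|v-\bar u\|\le \|u_0-\bar u\|\}$, and continuity of $g$ then rules out finite-time escape, giving global existence. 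If the trajectory reaches $\bar u$ at some finite time $t^*$, uniqueness past $t^*$ is enforced by the next step.

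For the initial condition $u_0 = \bar u$, the constant $u(t)\equiv \bar u$ is clearly a solution; to rule out any other I would argue by contradiction. If an alternative solution satisfied $u(0)=\bar u$ and $u(s) \neq \bar u$ for some $s>0$, then $V(u(s))>0 = V(u(0))$, contradicting the monotone non-increase of $V$ along any solution. Combining this with the previous paragraph yields uniqueness for every initial condition and for all $t\geq 0$.

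The main obstacle is precisely the non-Lipschitz character of $g$ at $\bar u$: singularities of the form $\|T(u)\|^{r_1-1}$ with $r_1\in(0,1)$ ordinarily destroy uniqueness at the equilibrium, as in the classical example $\dot x = -|x|^{r_1}\,\mathrm{sgn}(x)$, whose solutions can branch from $x=0$. The strict monotonicity $\langle u-\bar u, T(u)\rangle>0$ is exactly the ingredient that rescues uniqueness: it turns $V$ into a strict Lyapunov function off the equilibrium, so any trajectory departing from $\bar u$ would have to raise $V$, which the dynamics forbid.
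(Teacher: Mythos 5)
Your argument is correct. Note that the paper does not prove this lemma at all --- it is quoted verbatim from \cite{Garg21} --- and your proof is essentially the standard argument used there: continuity of the full vector field at $\bar u$ (via $\|c(u)T(u)\|\le a_1\|T(u)\|^{r_1}+a_2\|T(u)\|^{r_2}$) plus local Lipschitz continuity away from $\bar u$ give existence and off-equilibrium uniqueness, while the strict monotonicity condition makes $V(u)=\tfrac12\|u-\bar u\|^2$ strictly decreasing off $\bar u$, which both confines trajectories to a compact ball (global existence) and forbids any solution from leaving the equilibrium (uniqueness at and after hitting $\bar u$).
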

We now show that solutions to the dynamical system \eqref{newydynamicalsystem} exist and are uniquely determined. 
\begin{proposition}\label{uunique sol for DS}
    Let $\Phi: \R^n \longrightarrow 2^{\R^n} $   and  \(f: \R^n \longrightarrow \R^n\) be such that assumption ({\bf A}) holds.
Then the dynamical system \eqref{newydynamicalsystem} has a unique solution in a classical sense. 
\end{proposition}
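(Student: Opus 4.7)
The plan is to apply Lemma \ref{prop.1} (the Garg21 existence/uniqueness result) directly, since the dynamical system \eqref{newydynamicalsystem} is already written in the exact form $\dot{u}=-c(u)T(u)$ with $T(u)=f(u)-P_{\Phi(u)}(f(u)-\alpha u)$ and with the coefficient function $\psi$ matching the template for $c$ in Lemma \ref{prop.1} (same exponents $r_1\in(0,1)$, $r_2>1$ and positive weights $a_1,a_2$). So the whole content of the proof reduces to checking the three hypotheses imposed on $T$ in Lemma \ref{prop.1}: local Lipschitz continuity, the existence of a zero $\bar u$, and strict monotonicity of $T$ at that zero.

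First I would verify local Lipschitz continuity of $T$. This is immediate from Lemma \ref{LL cont}, which under the standing assumption \textbf{(A)} established the global bound $\|T(u)-T(v)\|\le (2L+\alpha+\mu)\|u-v\|$; global Lipschitz continuity implies local Lipschitz continuity. Next I would select $\bar u=u^*$, the unique solution of IQVIP \eqref{iqvip} whose existence is guaranteed by Theorem \ref{unique sol} under \textbf{(A)}. By Remark \ref{re 1}, $u^*$ is an equilibrium of \eqref{firstdynamicalsystem}, which is precisely the statement $T(u^*)=0$.

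Third, I would verify the strict monotonicity condition $\langle u-u^*,T(u)\rangle>0$ for every $u\in\R^n\setminus\{u^*\}$. This is exactly Part \eqref{bound fo iner} of Lemma \ref{bound for proj}, which yields the sharper quantitative bound
\[
\langle u-u^*,T(u)\rangle\ \ge\ \bigl(\beta-\sqrt{L^2+\alpha^2-2\alpha\beta}-\mu\bigr)\|u-u^*\|^2>0,
\]
the positivity of the constant being guaranteed by the scalar inequality built into assumption \textbf{(A)} (and used already in Lemma \ref{bound for proj}).

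Having verified the three hypotheses, Lemma \ref{prop.1} applies verbatim and yields existence and uniqueness in the classical sense of the solution of \eqref{newydynamicalsystem} for every initial condition. The only delicate point I foresee is bookkeeping: one must check that the function $\psi$ in \eqref{dnrho}, which is defined piecewise with the value $0$ on $\mathrm{Zer}(T)$, is indeed the same object as $c$ in Lemma \ref{prop.1} (same formula on the complement, same convention at zeros of $T$). Beyond that, the argument is a straightforward translation of the hypotheses; no new estimates are needed.
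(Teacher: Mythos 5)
Your proposal is correct and follows essentially the same route as the paper: both reduce the statement to checking the three hypotheses of Lemma \ref{prop.1}, using Lemma \ref{LL cont} for (local) Lipschitz continuity of $T$, Theorem \ref{unique sol} together with Remark \ref{re 1} to identify the unique zero $\bar u=u^*$ of $T$, and Part \ref{bound fo iner} of Lemma \ref{bound for proj} for the strict inequality $\langle u-u^*,T(u)\rangle>0$. The one caveat --- inherited from the paper's own proof rather than introduced by you --- is that the positivity of $\beta-\sqrt{L^2+\alpha^2-2\alpha\beta}-\mu$ is not literally implied by assumption ({\bf A}) (which only bounds $\sqrt{L^2+\alpha^2-2\alpha\beta}+\mu$ by $\alpha$, not by $\beta$), so strictly speaking the extra condition \eqref{dk cho beta 2} is needed here as well.
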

\begin{proof}

	By Theorem \ref{unique sol}, IQVIP \eqref{iqvip} has a unique solution. It follows that the set \(\text{\rm Zer}(T)\)  is a singleton, here  $T(u)\equiv f(u)-P_{\Phi(u)}(f(u)-\alpha u) $. Consequently,  the vector field in \eqref{firstdynamicalsystem} has a unique equilibrium point \(\overline{u}=u^*\). By  Lemma \ref{bound for proj}, Part \ref{bound fo iner}, the vector field $T(u)\equiv f(u)-P_{\Phi(u)}(f(u)-\alpha u) $ satisfies the following property:
	\begin{equation*}
		\langle u-\overline{u}, T(u) \rangle > 0
	\end{equation*}
	for all $u\in  \R^n\setminus \{\overline{u}\}$, where \(\{\overline{u}\}= \rm Zer(T)\).  Also, 
     by Lemma \ref{LL cont}, the operator \(T(\cdot)=P_{\Phi(\cdot)}(f(\cdot)-\alpha. id(\cdot))-f(\cdot)\) is Lipschitz continuous. This implies Lipschitz continuity on \(\R^n\) of the vector field on the right-hand side of \eqref{firstdynamicalsystem}. Finally, the conclusion follows from Lemma \ref{prop.1}. 
\end{proof}
We now present the main result of this section.

\begin{theorem}\label{tr2}   Let $\Phi: \R^n \longrightarrow 2^{\R^n} $  and   \(f: \R^n \longrightarrow \R^n\) be such that the conditions in assumption ({\bf A}) holds. Assume further that
\begin{align}\label{dk cho beta 2}
    \sqrt{L^2+\alpha^2-2\alpha\beta}+\mu<  \beta.
\end{align}
Let $u^*\in \R^n$ be an equilibrium point of dynamical system \eqref{newydynamicalsystem}.  Then the solution \(u^*\in \R^n\) of the IQVIP \eqref{iqvip} is a global fixed-time stable equilibrium point of \eqref{newydynamicalsystem} for any \(r_1\in (0, 1)\) and \(r_2>1\) and the following time estimate holds:
	$$T(u(0))\leq T_{\max}=\dfrac{1}{s_1(1-b_1)}+\dfrac{1}{s_2(b_2-1)}$$
	for some $p_1>0, p_2>0, b_1\in (0.5, 1), b_2>1$. 
	
	In addition, if take $b_1=1-\frac{1}{2\xi}, b_2=1+\frac{1}{2\xi}$ with $\xi>1$   then the following time estimate holds:
	$$T(u(0))\leq T_{\max} =\dfrac{\pi \xi}{\sqrt{s_1s_2}}$$
	for some constants $s_1>0, s_2>0$ and $\xi>1$. 
\end{theorem}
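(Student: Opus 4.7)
The plan is to apply Polyakov's Lyapunov condition for fixed-time stability (Theorem \ref{lm1}) using the natural candidate $V(u)=\tfrac12\|u-u^*\|^2$, which is continuously differentiable, radially unbounded, vanishes only at $u^*$, and positive elsewhere, so the remaining task reduces to producing a differential inequality of the form $\dot V(u)\le -(s_1V^{p_1}+s_2V^{p_2})$ with $p_1\in(0,1)$, $p_2>1$.

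Differentiating $V$ along a trajectory of \eqref{newydynamicalsystem} gives
\begin{equation*}
\dot V(u)=-\psi(u)\,\bigl\langle u-u^*,\,f(u)-P_{\Phi(u)}(f(u)-\alpha u)\bigr\rangle.
\end{equation*}
Setting $T(u)=f(u)-P_{\Phi(u)}(f(u)-\alpha u)$ and writing $c:=\beta-\sqrt{L^2+\alpha^2-2\alpha\beta}-\mu$, which is strictly positive by \eqref{dk cho beta 2}, Part \eqref{bound fo iner} of Lemma \ref{bound for proj} yields $\langle u-u^*,T(u)\rangle\ge c\|u-u^*\|^2$. Splitting $\psi(u)$ according to the two terms in \eqref{dnrho}, I obtain
\begin{equation*}
\dot V(u)\le -c\bigl(a_1\|T(u)\|^{r_1-1}+a_2\|T(u)\|^{r_2-1}\bigr)\|u-u^*\|^2.
\end{equation*}
The key step is then to eliminate $\|T(u)\|$ in favor of $\|u-u^*\|$ using the two-sided bounds of Lemma \ref{bound for proj}: since $r_1-1<0$, I use the upper bound from Part \eqref{bound for operator}, namely $\|T(u)\|\le C_1\|u-u^*\|$ with $C_1=L+\mu+\sqrt{L^2+\alpha^2-2\alpha\beta}$, to get $\|T(u)\|^{r_1-1}\ge C_1^{r_1-1}\|u-u^*\|^{r_1-1}$; and since $r_2-1>0$, I use the lower bound from Part \eqref{part4}, which under \eqref{dk cho beta 2} reads $\|T(u)\|\ge c\|u-u^*\|$, giving $\|T(u)\|^{r_2-1}\ge c^{r_2-1}\|u-u^*\|^{r_2-1}$. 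Plugging both in and substituting $\|u-u^*\|^2=2V(u)$ produces
\begin{equation*}
\dot V(u)\le -s_1V(u)^{p_1}-s_2V(u)^{p_2},\qquad p_i:=\tfrac{r_i+1}{2},
\end{equation*}
with $s_1:=c\,a_1C_1^{r_1-1}2^{p_1}$ and $s_2:=c\,a_2c^{r_2-1}2^{p_2}$, both strictly positive.

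Because $r_1\in(0,1)$ forces $p_1\in(1/2,1)$ and $r_2>1$ forces $p_2>1$, taking $p_3=1$ in Theorem \ref{lm1} satisfies $p_1p_3<1<p_2p_3$, so $u^*$ is globally fixed-time stable with $T(u(0))\le \frac{1}{s_1(1-p_1)}+\frac{1}{s_2(p_2-1)}$. For the refined estimate, I choose $r_1=1-\frac{1}{\xi}$ and $r_2=1+\frac{1}{\xi}$ (both admissible provided $\xi>1$), which give $p_1=1-\frac{1}{2\xi}$ and $p_2=1+\frac{1}{2\xi}$; the second conclusion of Theorem \ref{lm1} then yields $T(u(0))\le \frac{\pi\xi}{\sqrt{s_1s_2}}$. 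I expect no serious obstacle beyond keeping track of the exponents and the sign of $r_i-1$ when switching between the upper and lower bounds of Lemma \ref{bound for proj}; crucially, the strict inequality \eqref{dk cho beta 2} is what makes $c>0$ and simultaneously allows dropping the absolute value in Part \eqref{part4}.
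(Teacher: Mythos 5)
Your proposal is correct and follows essentially the same route as the paper's proof: the Lyapunov function $V(u)=\tfrac12\|u-u^*\|^2$, the lower bound on $\langle u-u^*,T(u)\rangle$ from Part \eqref{bound fo iner} of Lemma \ref{bound for proj}, the upper bound of Part \eqref{bound for operator} for the $r_1$-term and the lower bound of Part \eqref{part4} for the $r_2$-term, and then Polyakov's criterion with $p_i=\tfrac{1+r_i}{2}$ and $s_i=2^{p_i}q_i$. Your explicit tracking of the sign of $r_i-1$ when inverting the two-sided bounds, and of why \eqref{dk cho beta 2} lets one drop the absolute value in Part \eqref{part4}, is exactly the bookkeeping the paper performs implicitly.
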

\begin{proof} By Proposition \ref{uunique sol for DS},  a solution of \eqref{newydynamicalsystem} exists and is uniquely determined for all forward times. Let  \(V: \R^n\rightarrow \R\) be the Lyapunov function defined by:
	\[V(u):=\dfrac{1}{2}\|u-u^*\|^2.\]
	Differentiating with respect to time along the solution of \eqref{newydynamicalsystem} for Lyapunov function, starting from any \(u(0)\in  \R^n \setminus\{u^*\}\) with noting that $u^*\in \text{\rm Zer}(f(\cdot)-P_\Phi(\cdot)(F(\cdot)-\alpha .id(\cdot))$ being unique  one has:
	\begin{align*}
		\dot{V}=&\langle u-u^*, \dot{u}\rangle \\
        = &- \dfrac{a_1}{\|f(u)-P_{\Phi(u)}(f(u)-\alpha u)\|^{1-r_1}} \langle u-u^*, f(u)-P_{\Phi(u)}(f(u)-\alpha u)\rangle \\
        &- \dfrac{a_2}{\|f(u)-P_{\Phi(u)}(f(u)-\alpha u)\|^{1-r_2}} \cdot \langle u-u^*, f(u)-P_{\Phi(u)}(f(u)-\alpha u)\rangle \notag	
        \end{align*}
	for all $u\in  \R^n\setminus \{u^*\}$. 
	By Lemma \ref{bound for proj}, Part \ref{bound fo iner} we derive that 
	\begin{align*}
		\dot{V}\leq  &- \dfrac{a_1}{\|f(u)-P_{\Phi(u)}(f(u)-\alpha u)\|^{1-r_1}} (\beta-\sqrt{L^2+\alpha^2-2\alpha \beta}-\mu) \|u-u^*\|^2\\
        &- \dfrac{a_2}{\|f(u)-P_{\Phi(u)}(f(u)-\alpha u)\|^{1-r_2}} \left(\beta-\sqrt{L^2+\alpha^2-2\alpha \beta}-\mu\right) \|u-u^*\|^2 \notag,
	\end{align*}
	for all $u\in  \R^n\setminus \{u^*\}$. 
	By Lemma \ref{bound for proj} again  one obtains that 
	\begin{align}\label{eq16fin} 
		\dot{V} \leq &-\left(\dfrac{a_1(\beta-\mu-\sqrt{L^2+\alpha^2-2\alpha\beta})}{(L+\mu+\sqrt{L^2+\alpha^2-2\alpha\beta})^{1-r_1}}\dfrac{\|u-u^*\|^2}{\|u-u^*\|^{1-r_1}}\right)\notag\\ 
        -&\left(\dfrac{a_2(\beta-\mu-\sqrt{L^2+\alpha^2-2\alpha\beta})}{(\beta -\mu-\sqrt{L^2+\alpha^2-2\alpha\beta})^{1-r_2}}\dfrac{\|u-u^*\|^2}{\|u-u^*\|^{1-r_2}}\right)\notag \\
		=&-q_1|u-u^*\|^{1+r_1}-q_2\|u-u^*\|^{1+r_2},
	\end{align}
	where \(q_1=\dfrac{a_1(\beta-\mu-\sqrt{L^2+\alpha^2-2\alpha\beta})}{(L+\mu+\sqrt{L^2+\alpha^2-2\alpha\beta})^{1-r_1}}\) and \(q_2= a_2(\beta-\mu-\sqrt{L^2+\alpha^2-2\alpha\beta})^{r_2}\). 
	By condition \eqref{dk cho beta 2} it holds that $q_1>0, q_2>0$ for all $0<r_1<1, r_2>1$.  Hence, \eqref{eq16fin} implies that 
	\begin{equation}\label{eq17} 
		\dot{V} \leq -\left( s_1V^{p_1}+s_2V^{p_2} \right),
	\end{equation}
	here \(s_i =2^{p_i}q_i\) and \(p_i=\frac{1+r_i}{2}\), for $i=1,2$. Note that \(s_1>0,p_1<1\) for any \(r_1\in (0, 1)\) and \(s_2>0, p_2>1\) for any \(r_2>1\). Then the conclusion follows from Theorem \ref{lm1}.  
\end{proof}

\section{Consistent discretization of the modified projection dynamical system} \label{sec4}
In this section, we show that the fixed-time stability of the dynamical system \eqref{newydynamicalsystem} is preserved under time discretization. It is worth noting that while continuous-time dynamical systems exhibit fixed-time convergence, this property does not always extend to their discrete-time counterparts. However, a well-designed discretization scheme can help maintain the convergence behavior of the continuous system in the discrete setting (see, for example, \cite{POL}). We prove that under the conditions of Theorem \ref{tr2}, a consistent discretization of the fixed-time convergent modified projection dynamical system \eqref{newydynamicalsystem} can be achieved.

We first recall a result related to the consistent discretization of a differential inclusion problem introduced in \cite{Garg21}. Then we apply this to our proposed dynamical system \eqref{newydynamicalsystem}. 

\begin{theorem}\cite{Garg21}\label{tr3} 
	Consider the following differential inclusion problem \begin{equation}\label{eq20} 
		\dot{u}\in \Upsilon(u)
	\end{equation}
	where \(\Upsilon: \R^n \longrightarrow 2^{\R^n}\) is an upper semi-continuous set-valued mapping such that its values are non-empty, convex, compact, and  \(0\in \Upsilon(\overline{u})\) for some \(\overline{u}\in  \R^n\). Assume further that there exists a positive definite, radially unbounded, locally Lipschitz continuous and regular function \(V: \R^n\rightarrow \R^n\) such that \(V(\overline{u})=0\) and 
	\begin{equation*}
		\sup \dot{V}(u)\leq -\left( s_1 V(u)^{1-\frac{1}{\nu}}+s_2 V(u)^{1+\frac{1}{\nu}}\right) 
	\end{equation*}
	for all \(u\in   \R^n\setminus\{\overline{u}\}\), with \(s_1, s_2>0\) and \(\nu>1\), here  
	\begin{equation*}
		\dot{V}(u)=\left\{ w\in \R: \exists u\in \Upsilon (u) \mbox{ such that } \langle z, u\rangle =w, \forall z\in \partial_cV(u)\right\}
	\end{equation*}
	and   \(\partial_c V(u)\) denotes Clarke's generalized gradient of the function \(V\) at the point \(u\in   \R^n\). Then, the equilibrium point \(\overline{u}\in  \R^n\) of \eqref{eq20} is fixed-time stable, with the settling-time function \(T\) satisfying 
	\begin{equation*}
		T(u(0)) \leq \dfrac{\nu\pi}{2\sqrt{s_1s_2}} 
	\end{equation*} 
	for any starting point  \(u(0)\in  \R^n\). 
\end{theorem}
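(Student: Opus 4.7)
The plan is to prove fixed-time stability through a non-smooth Lyapunov analysis combined with an explicit scalar-ODE comparison. Upper semi-continuity of $\Upsilon$ together with its non-empty, convex, compact values lets Filippov's existence theorem produce, for every $u(0)\in\R^n$, an absolutely continuous solution $u(t)$ of \eqref{eq20}. Since $V$ is locally Lipschitz and regular, the composition $t\mapsto V(u(t))$ is absolutely continuous and, for almost every $t$, the generalized chain rule of Clarke gives $\tfrac{d}{dt}V(u(t))=\langle z,\dot u(t)\rangle$ for every $z\in\partial_c V(u(t))$; hence $\tfrac{d}{dt}V(u(t))\in \dot V(u(t))$ for a.e.\ $t$, and the standing hypothesis yields the scalar differential inequality
\begin{equation*}
\frac{d}{dt}V(u(t))\leq -\bigl(s_1\, V(u(t))^{1-1/\nu}+s_2\, V(u(t))^{1+1/\nu}\bigr)\qquad\text{for a.e. }t.
\end{equation*}

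The second step is a standard comparison with the autonomous scalar ODE $\dot v=-(s_1 v^{1-1/\nu}+s_2 v^{1+1/\nu})$ with $v(0)=V(u(0))$, which gives $V(u(t))\leq v(t)$ on their common interval of existence. Separation of variables and the substitution $w=v^{1/\nu}$ (so that $dv=\nu w^{\nu-1}\,dw$ and the denominator becomes $w^{\nu-1}(s_1+s_2 w^2)$) reduce the settling-time integral to $\nu\int\!\tfrac{dw}{s_1+s_2 w^2}$, which is an elementary arctangent. Integrating from $V(u(0))$ down to $0$ produces
\begin{equation*}
T(u(0))\leq \frac{\nu}{\sqrt{s_1 s_2}}\arctan\!\left(\sqrt{s_2/s_1}\;V(u(0))^{1/\nu}\right)\leq \frac{\nu\pi}{2\sqrt{s_1 s_2}},
\end{equation*}
where the last inequality uses $\arctan\leq\pi/2$ and gives a bound independent of $u(0)$, which is exactly what distinguishes fixed-time from merely finite-time stability. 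Radial unboundedness and positive-definiteness of $V$ then preclude finite-time escape and promote the result to a global conclusion, forcing $u(t)\to\bar u$ by the settling time.

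The main obstacle I anticipate is the rigorous justification of the first step: turning the set-valued hypothesis $\sup\dot V(u)\leq\cdots$ into a pointwise almost-everywhere inequality along a single Filippov trajectory. This requires that the generalized chain rule be applicable along an absolutely continuous curve driven by a USC convex-valued right-hand side, which in turn relies on the regularity of $V$ (so that Clarke's generalized directional derivative coincides with the one-sided directional derivative) and on a measurable selection from $\Upsilon(u(\cdot))$ so that $\dot u$ is well-defined a.e. Once this non-smooth calculus step is in place, the remainder of the proof is the purely scalar arctangent computation above.
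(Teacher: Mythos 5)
The paper does not prove this statement: it is imported verbatim from \cite{Garg21} as a known tool, so there is no in-paper proof to compare against. Your reconstruction is the standard argument from that reference and is essentially correct: Filippov existence from the USC/convex/compact hypotheses, the Clarke chain rule for regular locally Lipschitz $V$ along an absolutely continuous trajectory to get $\tfrac{d}{dt}V(u(t))\in\dot V(u(t))$ a.e., a scalar comparison, and the substitution $w=v^{1/\nu}$ yielding $T\leq \tfrac{\nu}{\sqrt{s_1s_2}}\arctan\bigl(\sqrt{s_2/s_1}\,V(u(0))^{1/\nu}\bigr)\leq \tfrac{\nu\pi}{2\sqrt{s_1s_2}}$. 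The only points worth tightening are routine: Lyapunov stability itself should be noted as following from positive definiteness plus monotone decrease of $V$ along trajectories, and the comparison step should invoke the maximal solution of the scalar ODE (whose right-hand side is continuous but not Lipschitz at $v=0$), after which $V\equiv 0$, hence $u\equiv\overline{u}$, persists beyond the settling time.
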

By applying the forward-Euler discretization of \eqref{eq20} we get
\begin{equation}\label{eq24} 
	u_{n+1}\in u_n+\lambda \Upsilon(u_n),
\end{equation}
where \(\lambda >0\) is the time-step. 

Under additional conditions on the function $V$, the authors in \cite{Garg21} derive an upper bound for the error of the sequence generated by equation \eqref{eq24}.
\begin{theorem}\cite{Garg21}\label{tr4} 
	Assume that the conditions of Theorem \ref{tr3} are valid, and  the function \(V\) satisfies the following quadratic growth condition
	\begin{equation*}
		V(u)\geq c\|u-\overline{u}\|^2 
	\end{equation*}
	for every $u\in  \R^n$, where $c>0$ and 
	$\overline{u}$ 
	is the equilibrium point of \eqref{eq20}. 
	Then, for all \(u_0\in  \R^n\) and \(\eps>0\), there exists 
	 $\lambda^*>0$ such that for any \(\lambda\in (0, \lambda^*]\), one has:
	\begin{equation*}
		\|u_n-\overline{u}\| <\begin{cases}
			\frac{1}{\sqrt{c}} \left( \sqrt{\frac{s_1}{s_2}}\tan\left(\frac{\pi}{2}-\frac{\sqrt{s_1s_2}}{\nu}\lambda n\right)\right)^{\frac{\nu}{2}}+\eps, &\mbox{ if }\ n\leq n^*\\
			\eps & \mbox{otherwise},
		\end{cases}
	\end{equation*}
	where \(u_n\) is a solution  of \eqref{eq24} starting from the point \(u_0\)  and $n^* = \left \lceil \dfrac{\nu\pi}{2\lambda\sqrt{s_1s_2}} \right \rceil$.
\end{theorem}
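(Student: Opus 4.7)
The plan is to prove Theorem \ref{tr4} by comparing the discrete Euler iterates to the explicit solution of the scalar comparison ODE that governs the Lyapunov function in continuous time, and then converting the resulting bound on $V(u_n)$ into a bound on $\|u_n-\overline{u}\|$ via the quadratic growth assumption. The first step is to integrate $\dot{\phi}=-(s_1\phi^{1-1/\nu}+s_2\phi^{1+1/\nu})$ explicitly: the substitution $\psi=\phi^{1/\nu}$ reduces this to $\dot{\psi}=-\tfrac{1}{\nu}(s_1+s_2\psi^2)$, which separates to yield
\begin{equation*}
\psi(t)=\sqrt{s_1/s_2}\,\tan\!\Bigl(\arctan\bigl(\sqrt{s_2/s_1}\,\psi(0)\bigr)-\tfrac{\sqrt{s_1s_2}}{\nu}t\Bigr).
\end{equation*}
Majorizing by the worst case $\psi(0)=+\infty$ gives the universal envelope $\phi(t)\leq\bigl(\sqrt{s_1/s_2}\tan(\pi/2-\tfrac{\sqrt{s_1s_2}}{\nu}t)\bigr)^{\nu}$ on $[0,\tfrac{\nu\pi}{2\sqrt{s_1s_2}})$, which is precisely the expression appearing inside the claimed bound after taking a square root and dividing by $\sqrt{c}$.

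The second step is a one-step discrete Lyapunov estimate. Fix a selection $w_n\in\Upsilon(u_n)$, so that $u_{n+1}=u_n+\lambda w_n$. Since $V$ is locally Lipschitz and regular, a Clarke-type mean value argument combined with the hypothesis $\sup\dot{V}(u)\leq-(s_1V(u)^{1-1/\nu}+s_2V(u)^{1+1/\nu})$ yields
\begin{equation*}
V_{n+1}\leq V_n-\lambda\bigl(s_1V_n^{1-1/\nu}+s_2V_n^{1+1/\nu}\bigr)+C\lambda^2,
\end{equation*}
valid on any sublevel set $\{V\leq M\}$, where $C$ depends on $M$, on the local Lipschitz constant of $V$, and on $\sup\{\|w\|:w\in\Upsilon(u),\,V(u)\leq M\}$; both latter quantities are finite by regularity of $V$ and by the upper semi-continuity with compact convex values of $\Upsilon$.

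The third step is an induction comparing $V_n$ to $\phi(n\lambda)$, establishing $V_n\leq\phi(n\lambda)+\eta$ for $n\leq n^*$, where $\eta=\eta(\lambda)\to 0$ as $\lambda\to 0$. As long as $V_n$ sits above $\phi(n\lambda)+\eta$, monotonicity of the comparison ODE's right-hand side in the $V$-variable makes the discrete decrement exceed the continuous decrement by a margin that dominates the $O(\lambda^2)$ residual; once $V_n$ sinks into the envelope, a short calculation shows it cannot escape in a single step provided $\lambda$ is small enough. For $n>n^*$ the continuous envelope has already collapsed to zero, and an invariance argument leveraging the $V^{1+1/\nu}$ term confines iterates to the $\eta$-sublevel set of $V$. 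Applying the quadratic growth $V(u)\geq c\|u-\overline{u}\|^2$ then converts $V_n\leq\phi(n\lambda)+\eta$ into the displayed bound on $\|u_n-\overline{u}\|$, with $\varepsilon$ playing the role of $\sqrt{\eta/c}$.

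The main obstacle is the near-equilibrium regime, where the continuous-time fixed-time mechanism relies on the singular term $V^{1-1/\nu}$ whose derivative in $V$ blows up as $V\downarrow 0$. There the $O(\lambda^2)$ remainder in the one-step estimate is no longer uniformly small relative to the driving decrement, which is exactly why one cannot recover fixed-time contraction to $\overline{u}$ in the discrete scheme and must settle for an $\varepsilon$-tube. Closing the quantitative comparison therefore requires choosing $\lambda^*$ depending on $\varepsilon$ and on a bound for $\|u_0-\overline{u}\|$ (absorbed into the constant $M$) so that the accumulated residual over at most $n^*$ steps cannot break the induction, which in turn dictates the prescribed form of the error estimate.
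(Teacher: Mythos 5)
The paper does not prove this statement: Theorem \ref{tr4} is imported verbatim from \cite{Garg21} and used as a black box, so there is no in-paper argument to compare against. Judged on its own terms, your sketch reconstructs what is essentially the standard (and, as far as the structure goes, the correct) route: integrate the scalar comparison ODE via $\psi=\phi^{1/\nu}$ to get the tangent envelope $\phi(t)\leq\bigl(\sqrt{s_1/s_2}\tan(\pi/2-\tfrac{\sqrt{s_1s_2}}{\nu}t)\bigr)^{\nu}$, prove a one-step descent inequality for $V_n=V(u_n)$, run a discrete comparison up to $n^*$, trap the iterates in a small sublevel set afterwards, and convert through the quadratic growth bound (your bookkeeping $\|u_n-\overline u\|\leq\sqrt{\phi(n\lambda)/c}+\sqrt{\eta/c}$ and the identification of $n^*$ with the vanishing time of the envelope are both right).

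Two steps, however, are asserted rather than proved, and both are genuinely delicate. First, the one-step estimate $V_{n+1}\leq V_n-\lambda(s_1V_n^{1-1/\nu}+s_2V_n^{1+1/\nu})+C\lambda^2$ does not follow from local Lipschitzness and regularity of $V$ alone. Lebourg's mean value theorem gives $V_{n+1}-V_n=\lambda\langle z,w_n\rangle$ with $z\in\partial_cV(\xi)$ at an \emph{intermediate} point $\xi$, while the hypothesis of Theorem \ref{tr3} only controls pairings of $\partial_cV(u)$ with $\Upsilon(u)$ at the \emph{same} point; passing from $\xi$ to $u_n$ with an $O(\lambda)$ error requires $\partial_cV$ to be (single-valued and) Lipschitz, i.e.\ essentially $V\in C^{1,1}$. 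This is harmless in the intended application, where $V(u)=\tfrac12\|u-u^*\|^2$ gives the inequality directly from the expansion of the square, but it is a real gap at the level of generality in which you (and the theorem) state it. Second, the comparison induction cannot invoke monotonicity of the discrete map $v\mapsto v-\lambda g(v)$ with $g(v)=s_1v^{1-1/\nu}+s_2v^{1+1/\nu}$, because $g'(v)\sim s_1(1-\tfrac1\nu)v^{-1/\nu}\to\infty$ as $v\downarrow0$, so $\lambda g'(v)\leq1$ fails near the equilibrium no matter how small $\lambda$ is. You correctly identify this regime as the obstruction, but the fix has to be made explicit: split at a threshold $V=\delta(\eps)$, run the comparison only above it (where the accumulated residual $n^*C\lambda^2=O(\lambda)$ can be absorbed), and below it prove forward invariance of the $\delta$-sublevel set using $V_{n+1}\leq V_n+C\lambda^2$ together with the guaranteed decrement once $V_n$ re-exceeds $\delta/2$, say. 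As written, the phrase ``a short calculation shows it cannot escape in a single step'' is doing the work of that entire invariance argument.
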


Using this result we obtain the fixed-time stability for the discretization version of \eqref{newydynamicalsystem} and get an upper bound for settling time. 
\begin{theorem}
	Consider the forward-Euler discretization of \eqref{newydynamicalsystem}:
	\begin{equation}\label{eq29} 
		u_{n+1}=u_n-\lambda \psi(u_n)(f(u_n)-P_{\Phi(u_n)}(f(u_n)-\alpha u_n)),
	\end{equation}
	where  \(\psi\) is given by \eqref{dnrho},  \(a_1, a_2>0, r_1=1-2/\nu\) and \(r_2=1+2/\nu\), \(\nu\in (2, \infty)\), and \(\lambda>0\) is the time-step. Then for every \(u_0\in  \R^n\), every \(\eps>0\), if  conditions in assumption {\bf (A)} holds, there exist \(\nu>2, s_1, s_2>0\) and \(\lambda^*>0\) such that for any \(\lambda\in (0, \lambda^*]\), we have: 
	\begin{equation*}
		\|u_n-u^*\|< \begin{cases}
			\sqrt{2}\left( \sqrt{\frac{s_1}{s_2}}\tan\left(\frac{\pi}{2}-\frac{\sqrt{s_1s_2}}{\nu}\lambda n\right)\right)^{\frac{\nu}{2}}+\eps, &  \mbox{ if } n\leq n^*\\ 
			\eps & \mbox{otherwise},
		\end{cases}
	\end{equation*}
	where \(n^*=\left\lceil \dfrac{\nu\pi}{2\lambda \sqrt{s_1s_2}} \right\rceil\) and \(u_n\) is a solution of \eqref{eq29} starting from the point \(u_0\) and \(u^*\in  \R^n\) is the unique solution of inverse quasi-variational inequality problem \eqref{iqvip}. 
\end{theorem}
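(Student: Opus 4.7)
The plan is to reduce this statement to Theorem \ref{tr4} by identifying the correct single-valued vector field, verifying the Lyapunov decrease with the prescribed exponents, and reading off the bound.

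First I would set $\Upsilon(u) := -\psi(u)\bigl(f(u) - P_{\Phi(u)}(f(u) - \alpha u)\bigr)$ so that \eqref{eq29} is exactly the forward-Euler discretization \eqref{eq24} of $\dot u = \Upsilon(u)$, namely the system \eqref{newydynamicalsystem}. Since we are in the single-valued setting, the upper semi-continuity / nonempty convex compact value requirements of Theorem \ref{tr3} reduce to continuity of $\Upsilon$; the Lipschitz continuity of $u \mapsto f(u) - P_{\Phi(u)}(f(u)-\alpha u)$ is provided by Lemma \ref{LL cont}, and the scalar $\psi(\cdot)$, extended by $0$ on $\mathrm{Zer}(T)$, is continuous at $u^*$ because, by Part \ref{part4} of Lemma \ref{bound for proj}, $\|T(u)\| \to 0$ as $u \to u^*$ forces both $\|T(u)\|^{r_1}$ and $\|T(u)\|^{r_2}$ to vanish (for $r_2 > 1 > 0$ and $r_1 \in (0,1)$), which is enough to guarantee existence and uniqueness of classical solutions via Proposition \ref{uunique sol for DS} and to apply the consistent-discretization machinery.

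Next I would choose the Lyapunov function $V(u) := \tfrac{1}{2}\|u - u^*\|^2$, which is positive definite, radially unbounded, $C^1$ (hence regular), and satisfies the quadratic growth condition $V(u) \geq c\|u-u^*\|^2$ with $c = 1/2$. With the specific choice $r_1 = 1 - 2/\nu \in (0,1)$ and $r_2 = 1 + 2/\nu > 1$ for $\nu > 2$, the computation carried out in the proof of Theorem \ref{tr2} yields, verbatim, the differential inequality
\begin{equation*}
\dot V(u) \leq -\bigl(s_1 V(u)^{1-1/\nu} + s_2 V(u)^{1+1/\nu}\bigr), \qquad u \in \R^n\setminus\{u^*\},
\end{equation*}
with $s_i = 2^{p_i} q_i$, $p_i = (1+r_i)/2 = 1 \pm 1/\nu$, and $q_1, q_2 > 0$ guaranteed positive by hypothesis \textbf{(A)}. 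This is exactly the hypothesis of Theorem \ref{tr3}, so fixed-time stability of the continuous system is confirmed.

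Finally, I would plug into Theorem \ref{tr4}: with $c = 1/2$, the factor $1/\sqrt{c}$ becomes $\sqrt{2}$, and the settling index becomes $n^* = \lceil \nu\pi/(2\lambda\sqrt{s_1 s_2})\rceil$, giving the stated piecewise bound directly. The main obstacle I anticipate is the continuity/regularity of $\Upsilon$ at the equilibrium, where $\psi$ is defined by a case split; but Part \ref{part4} of Lemma \ref{bound for proj} together with the exponents $r_1 \in (0,1)$, $r_2 > 1$ ensures $\psi(u)\,T(u) \to 0$ as $u \to u^*$, so $\Upsilon$ is continuous there, and the remainder of the argument is purely an application of the already-established Theorems \ref{tr2} and \ref{tr4}.
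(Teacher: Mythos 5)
Your proposal is correct and follows essentially the same route as the paper: it invokes the Lyapunov inequality \eqref{eq17} established in the proof of Theorem \ref{tr2} with the exponents $p_i=(1+r_i)/2=1\pm 1/\nu$, takes $V(u)=\tfrac12\|u-u^*\|^2$ with quadratic-growth constant $c=1/2$, and applies Theorem \ref{tr4}. Your additional verification of the hypotheses of Theorem \ref{tr3} (continuity of the vector field at the equilibrium via Part \ref{part4} of Lemma \ref{bound for proj} and Lemma \ref{LL cont}) is a detail the paper leaves implicit, and you correctly use $r_1=1-2/\nu$, $r_2=1+2/\nu$ where the paper's proof contains a small typo.
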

\begin{proof} From the proof of Theorem \ref{tr2} we observe that  inequality \eqref{eq17} is valid for any \(r_1\in  (0, 1)\)  and \(r_2>1\), since \( r_1 =1-\dfrac{1}{\nu}\) and \(r_2 =1+\dfrac{1}{\nu}\), it holds for  any \(\nu>2\). Hence, all conditions in Theorem \ref{tr4} are fulfilled with the Lyapunov function $V(u)=\dfrac{1}{2}\|u-u^*\|^2$ and with $c=\frac{1}{2}$. Using Theorem \ref{tr4} we derive that the conclusion of the theorem holds. 
\end{proof}

\begin{remark}
    Observe that an equilibrium point $u^*\in \R^n$ of the dynamical system \eqref{newydynamicalsystem} is global fixed-time stable, meaning that every trajectory of the system \eqref{newydynamicalsystem} converges strongly to $u^*$. Hence, instead of using a constant time step size as in \eqref{eq29} we discretize the time of the dynamical system \eqref{newydynamicalsystem} with varying step sizes, this means that the time step size, $\lambda$, depends on the number of iterations, $n$, i.e., $\lambda=\lambda(n)=\lambda_n$, then the iterated sequence \eqref{eq29} still converges to $u^*$. In this case, equation \eqref{eq29} can be rewritten as follows. 
    \begin{equation}\label{eq30} 
		u_{n+1}=u_n-\lambda_n \psi(u_n)(f(u_n)-P_{\Phi(u_n)}(f(u_n)-\alpha u_n)).
	\end{equation}
\end{remark}


\section{Numerical Illustration}\label{num}

In this section, we present numerical examples to illustrate the theoretical results established in the previous sections. Additionally, we apply our proposed method to the traffic assignment problem. To further demonstrate its effectiveness and advantages, we compare our algorithm with the approach presented in \cite{VuongThanh}. All implementations are carried out in MATLAB.

\begin{example} 
		
We consider a simple example in $\mathbb{R}^2$. Let $f:\mathbb{R}^2 \rightarrow \mathbb {R}^2$ defined by $ f(u)=A u$ where $$ A= \begin{bmatrix}  \ \ 3.2 & 2 \\ -0.6& 1\end{bmatrix}. $$ The IQVIP \eqref{iqvip} is considered  with $\Phi(u_1,u_2)= R(u_1,u_2)$ where $R(u_1,u_2)$ is the closed rectangle restricted by four lines $u= |u_1|, u=-|u_1|, v= |u_2|,v = -|u_2|$.\\
			
A straightforward verification shows that the condition \eqref{dk phi} is satisfied with $\mu=1$, and the matrix $A$ is positive definite with eigenvalues are 2.2 and 2. Consequently, $f$ is 2.2-Lipschitz continuous and 2-strongly monotone. This means $L=2.2$ and $\beta=2$. We choose $\alpha=2$. Then, condition \eqref{dk cho beta 2} is fulfilled. 
In addition, it is easy to check that $(0,0)$ is a solution of IQVIP \eqref{iqvip} and by Theorem \ref{unique sol},  IQVIP \eqref{iqvip} has a unique solution. Therefore, $u^*= (0,0)$ is a unique solution of the IQVIP \eqref{iqvip}. \\
	
In this experiment, we choose a quite small time step, $\lambda = 0.00146$, and other parameters $a_1=20; a_2=20; r_1=0.95; r_2=1.5$. With the initial point $(1,1)$, Figure \ref{Exam1} illustrates the convergence rates of the sequences generated by \eqref{eq29} ({\bf red line}) and by the projection method ({\bf blue line}) in \cite{VuongThanh} to the solution $u^*$. We observe that in this example, the sequence $(u_n)$	generated by our proposed algorithm converges to the solution $u^*$ much faster than the one in \cite{VuongThanh}.  Specifically, with only 100 steps, we have achieved an error $\|u_n-u^*\| < 10^{-4}$, whereas the sequence generated by the projection algorithm in \cite{VuongThanh} is still quite far from the solution $u^*$.\\
	
\begin{figure}[ht] 
    \centering
    \includegraphics[scale=0.9]{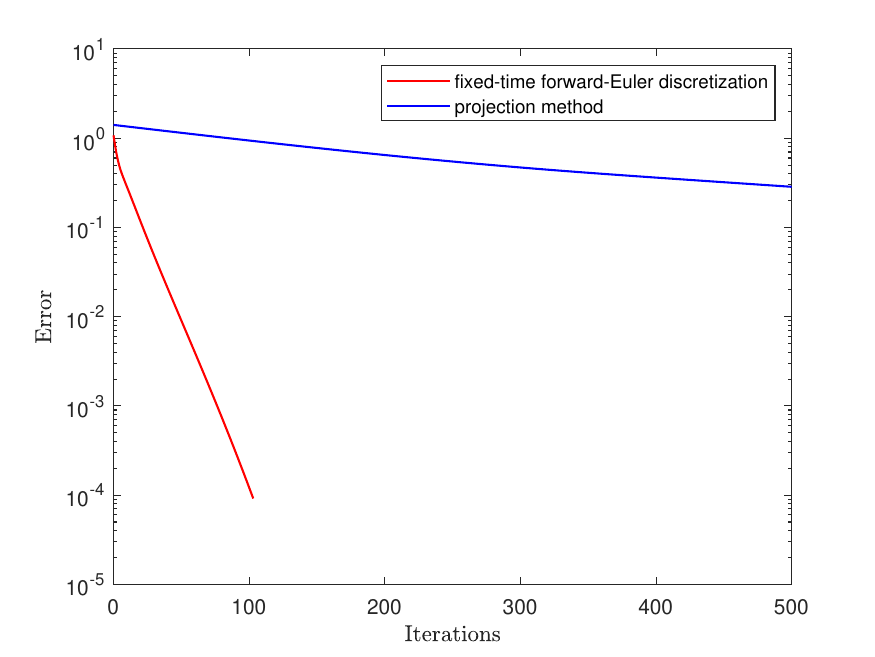}\\ 
    \caption{Performance of sequences generated by \eqref{eq29} and the projection method for same parameters.} 
    \label{Exam1}
\end{figure}
\end{example}

\begin{example}
{\bf Traffic Assignment Problems}\\
	We present a practical example adopted from \cite{VuongThanh}, which is referred to as a road pricing problem in traffic assignment. 
	In particular, we consider a continuous-time road pricing problem in which the authority seeks to manage the link flows, $f_j$, in the network by charging the link tolls, $u_j$, on a set of links $j \in J$ such that $f_j(u)$ belongs to the range of link flow $j$. We assume that the range for link flows depends on the fees imposed on the links, meaning the range of the link flows is a function of $u$, the fee on the links, i.e.,  $$\Phi(u) = \{u: a(u) \leq u\leq b(u)\}.$$
    
    
In \cite{VuongThanh}  this problem was established as an IQVIP: Find $u^*\in \R^n$ such that 
	\begin{equation*}
		g(u^*) \in - \Phi (u^*)\quad \text{and}    \quad \langle z + g(u^*), u^* \rangle \leq 0, \quad \forall z \in \Phi(u^*),
	\end{equation*}
	or equivalently
	\begin{equation}\label{iqvip exam2}
		g(u^*) \in - \Phi (u^*)\quad \text{and}    \quad \langle z - g(u^*), u^*\rangle  \geq 0, \quad \forall z \in -\Phi(u^*),
	\end{equation}
	with 	 $g=-f$. 
    
 Note that  $P_{-\Phi(u_n)}(g(u_n)-\alpha u_n) = -P_{\Phi(u_n)}(f(u_n) + \alpha u_n)$, and therefore the algorithm \eqref{eq30} for the IQVI problem \eqref{iqvip exam2} becomes
	\begin{align*} 
		u_{n+1} &= u_n + \lambda_n \psi(u_n) [P_{-\Phi(u_n)}(g(u_n)-\alpha u_n)- g(u_n) ]\\
		&= u_n + \lambda_n \psi(u_n)[ f(u_n) - P_{\Phi(u_n)}(f(u_n) + \alpha u_n)].
	\end{align*}
	
	We denote $R_n=\|\lambda_n \psi(u_n)(f(u_n) - P_{\Phi(u_n)}(f(u_n)+\alpha u_n))\|$ by the residual of Algorithm \eqref{eq29} and use it to illustrate the convergence rate of this algorithm.
	
	\begin{figure}[ht]
		\centering
		\caption{Road pricing problem with four bridge network} 
		\label{ODpic}
	\end{figure}

    As for the numerical experience, we reconsider the traffic network from \cite{VuongThanh} as shown in Figure \ref{ODpic}. This network consists of eight nodes and sixteen links connecting these nodes. We also reused the data from \cite{VuongThanh} presented in Tables \ref{ODdemand} and \ref{tf}.  As we can see, links 1, 2, 3, and 4 are four bridges over a river, connecting four origins $O_j$ with four	destinations $D_j$ and the demands between OD pairs given in Table \ref{ODdemand}. The goal is to ensure that the link flows remain within the specified range, i.e.,  $a(u) \leq f(u) \leq b(u)$, where $a(u)= u + A $ represents the lower bound and $b(u)=u+B$  represents the upper bound of the link flows. In this example, we still choose $A= (40,0,100)^T$ and $B= (90,50,200)^T$. We verify easily that the $\Phi(u)$ satisfies the condition \eqref{dk phi}.			
	\begin{table}[ht]
		\centering
		\caption{Origin-destination demand table} 
		\label{ODdemand}
	\end{table}
	
	\begin{table}[ht]
		\centering
		\caption{Link free flow travel time and capacity} 
		\label{tf}
	\end{table}
	Observe that link flows, $f_j(u)$, depend implicitly on the link tolls, $u ={u_j}$. As in \cite{VuongThanh}, the link flows $f_j$ are obtained by solving a fixed-demand user equilibrium traffic assignment, where the free flow travel times and the link capacities are given in Table \ref{tf} (the free flow travel times and the link capacities in link 5-16 are the same for each term). The link travel time $t_j$ on link $j$ follows the Bureau of Public Roads (BPR) function, i.e., 
	\[t_j(f_j)= t_j^0 \left[1+0.15 \left(\frac{f_j}{c_j}\right)^4\right],\]
	where $f_j,t_j^0$ and $c_j$ denote link flow, free flow travel time, and capacity on link $j$, respectively. Once again, we use the cost function from  \cite{VuongThanh}, which is the sum of travel time and imposed toll on each link. 
    
        \begin{figure}[ht]
		\centering
		\includegraphics[scale=0.35]{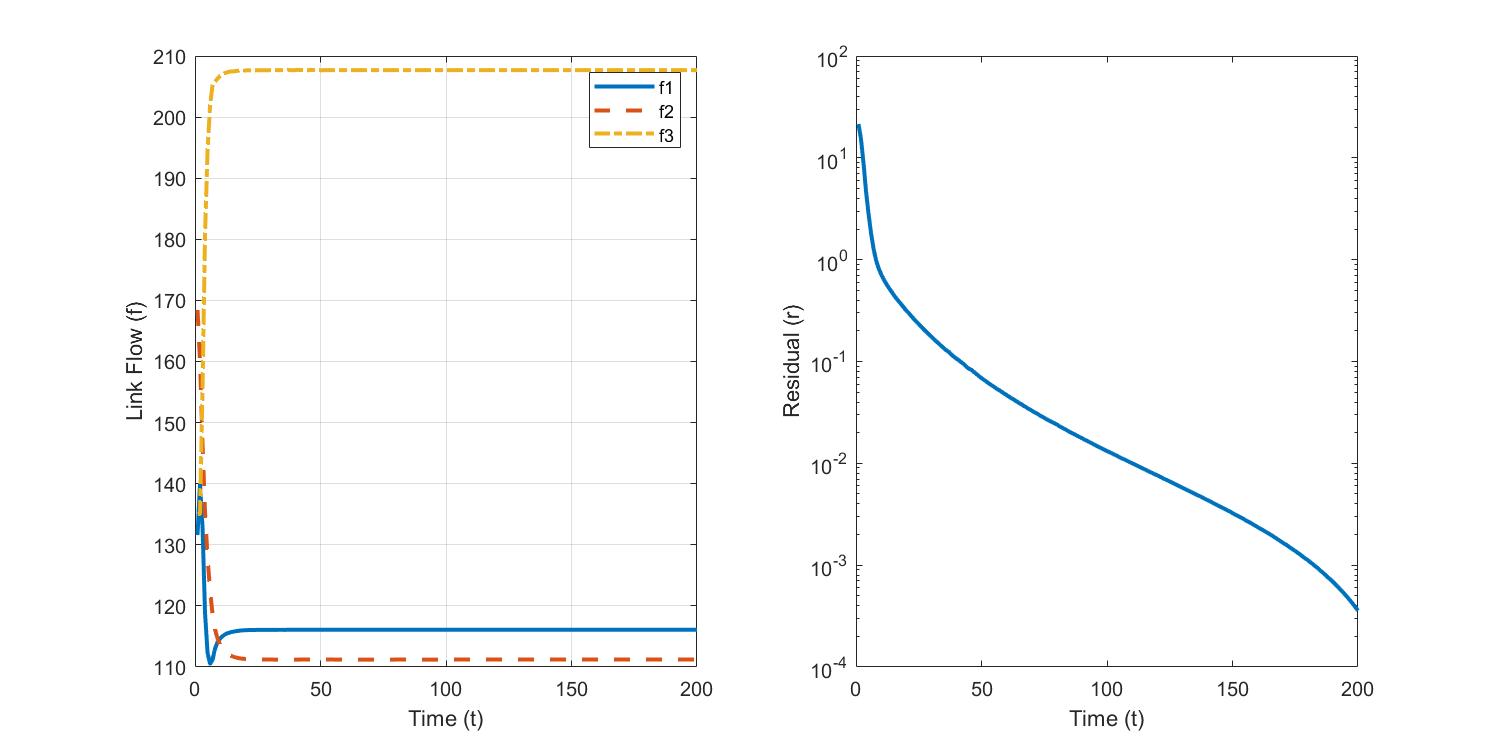}\\ 
		\caption{Flows on three bridges and convergence rate of the algorithm \ref{eq29}} 
		\label{Exam2}
	\end{figure}
    	
In our code, we fix scaling factor $\alpha =0.5$ and the parameters $a_1 = 0.75, a_2 = 0.75, r_1 = 0.65, r_2 = 1.5$. We choose stepsize of the form $\lambda_n = \frac{4}{n}$ to examine the stability of the dynamical system and the convergence of the iterated sequence \eqref{eq30}. Figure \ref{Exam2} shows link flows on three bridges and the convergence rate of the sequence \eqref{eq30}. We notice that after about 40 steps, the link flow on each bridge becomes stable at the level of 116.10, 111.22, 207.68 with the residual lower than 0.1. With the corresponding tolls on the three bridges being 26.09, 61.22, 7.69, the obtained link flows satisfy the given condition of the government.  
\end{example}

\section{Conclusion}\label{sec6}

In this paper, we present two modified projection dynamical systems for addressing inverse quasi-variational inequality problems—one ensuring finite-time stability and the other fixed-time stability. The latter guarantees the existence, uniqueness, and convergence of its trajectory to the unique solution of the inverse quasi-variational inequality problem within a fixed-time framework. Furthermore, it is shown that the fixed-time stability of the second dynamical system is preserved under the forward-Euler discretization. Numerical experiments are provided to illustrate the algorithm's performance. Future research may focus on investigating the finite-time and fixed-time stability of the modified projection dynamical systems in a broader setting, such as infinite-dimensional Hilbert or Banach spaces, with the potential to relax assumptions on operators and parameters.


\section*{Declarations} 
{\bf Conflict of interest} The authors declare no competing interests.

\end{document}